\newtheorem{thm}{Theorem}
\newtheorem{lemma}{Lemma}
\newtheorem{false statement}{False statement}
\theoremstyle{definition}
\newtheorem{claim}{Claim}
\newtheorem{ccclaim}{Claim}
\newtheorem{ccccclaim}{Claim}
\newtheorem{ccorollary}{Corollary}
\newtheorem{case}{Case}
\newtheorem{cccase}{Case}
\newtheorem{subcase}{Subcase}[case]
\begin{document}

\title{The index of $t\mathcal{{C}}_{3}^{-}$-free signed graphs
\footnote{Supported by Natural Science Foundation of Xinjiang Uygur Autonomous Region (No. 2024D01C41), Tianshan Talent Training Program (No. 2024TSYCCX0013), the Basic scientific research in universities of Xinjiang Uygur Autonomous Region (XJEDU2025P001) and NSFC (No. 12361071).}}
\author{{Dan Li}\thanks{Corresponding author. E-mail: ldxjedu@163.com.}, Mingsong Qin\\
{\footnotesize  College of Mathematics and System Science, Xinjiang University, Urumqi 830046, China}}
\date{}

\maketitle {\flushleft\large\bf Abstract:}
The classical spectral Tur\'{a}n problem is to determine the maximum spectral radius of an $F$-free graph of order $n$.  This paper extends this framework to signed graphs. Let  $\mathcal{C}_r^-$
be the set of all unbalanced signed graphs with underlying graphs $C_r$. Wang, Hou and Li [Linear Algebra Appl, 681 (2024) 47-65] previously determined the spectral Tur\'{a}n number of $\mathcal{C}_{3}^{-}$. In the present work, we characterize the extremal graphs that achieve the maximum index among all unbalanced signed graphs of order $n$ that are $t\mathcal{C}{3}^{-}$-free for $t\geq 2$. Furthermore, for $t\geq 3$, we identify the graphs with the second maximum index among all $t\mathcal{C}{3}^{-}$-free unbalanced signed graphs of fixed order $n$.

\vspace{0.1cm}
\begin{flushleft}
\textbf{Keywords:} Signed graph; Adjacency matrix; Index
\end{flushleft}
\textbf{AMS Classification:} 05C50; 05C35

\section{Introduction}
Let $G$ be a simple graph with vertex set $V(G)=\{v_1,...,v_n\}$ and edge set $E(G)=\{e_1,...,e_m\}$. The order and size of $G$ are defined as $|V(G)|$ and $|E(G)|$, respectively. Given a graph \( F \), a graph is said to be \( F \)-free if it does not contain a subgraph isomorphic to \( F \). The Tur\'{a}n number of \( F \), denoted by \( ex(n,F) \), is the maximum number of edges in an \( n \)-vertex \( F \)-free graph. An \( F \)-free graph is said to be extremal with respect to \( ex(n,F) \) if it has \( ex(n,F) \) edges. Denote by \( T_{n,r} \) the complete \( r \)-partite graph on \( n \) vertices in which all parts are as equal in size as possible. In 1941, Tur\'{a}n \cite{14} determined that the \(r\)-partite Tur\'{a}n graph \(T_{n,r}\) is the unique \(n\)-vertex graph maximizing the number of edges (Tur\'{a}n number \(ex(n,K_{r+1})\)) among all \(K_{r+1}\)-free graphs. Since then, the Tur\'{a}n problem has been extended to various forbidden subgraphs, including disjoint unions of classic graph structures. For instance, Moon \cite{121} and Simonovits \cite{S2} independently determined that for sufficiently large \(n\), the join of a complete graph and an \(r\)-partite Tur\'{a}n graph is the unique extremal graph for \(k K_{r+1}\)-free graphs (graphs without \(k\) vertex-disjoint copies of \(K_{r+1}\)). In 1962, Erd\H{o}s \cite{71} studied the Tur\'{a}n number for \(t C_3\)-free graphs (no \(t\) disjoint triangles) for $n>400(t-1)^2$, and subsequent works generalized this to disjoint odd cycles \(t C_{2\ell+1}\) \cite{72}.

Let \( A(G) \) be the adjacency matrix of a graph \( G \), and \( \rho(G) \) be its spectral radius. The spectral extremal value of a given graph \( F \), denoted by \( \text{spec}(n, F) \), is the maximum spectral radius over all \( n \)-vertex \( F \)-free graphs. An \( F \)-free graph on \( n \) vertices with maximum spectral radius is called an extremal graph with respect to \( \text{spec}(n, F) \). With the development of spectral graph theory, Nikiforov \cite{N1} showed that the \(r\)-partite Tur\'{a}n graph \(T_{n,r}\) also maximizes the spectral radius among \(K_{r+1}\)-free graphs. For disjoint subgraphs, Ni, Wang and Kang \cite{N2} proved that the spectral extremal graph for \(k K_{r+1}\)-free graphs is the same join structure as the edge-extremal graph, while Fang, Zhai and Lin \cite{72} obtained the extremal spectral radius \( \text{spec}(n, tC_l) \) for any fixed $t$, $l$ and large enough $n$. 

 An underlying graph $G$ together with a signature $\sigma:E(G)\to\{-1,+1\}$  forms a signed graph $\Gamma=(G,\sigma)$. In a signed graph, edge signs are usually interpreted as $\pm1$. An edge $e$ is positive (resp. negative) if $\sigma(e)=+1$ (resp. $\sigma(e)=-1$). A cycle in $\Gamma$ is said to be positive if it contains an even number of negative edges, otherwise it is negative. $\Gamma=(G,\sigma)$ is balanced if it has no negative cycles, otherwise it is unbalanced. Let $U\subset V(G)$, the operation of reversing the signs of all edges between $U$ and $V(G)\setminus U$ is called a switching operation. If a signed graph $\Gamma^\prime$ is obtained from $\Gamma$ by applying finitely many switching operations, then $\Gamma$ is said to be switching equivalent to $\Gamma^\prime$. For more details about the notion of signed graphs, we refer to \cite{1}. Signed graphs were first introduced in works of Harary \cite{8} and Cartwright and Harary \cite{5}, and the matroids of graphs were extended to matroids of signed graphs by Zaslavsky \cite{19}. Chaiken \cite{6} and Zaslavsky \cite{19} independently established the Matrix-Tree Theorem for signed graph . The theory of signed graphs is a special case of gain graphs and biased graphs \cite{20}. The adjacency matrix of $\Gamma$ is defined as $A(\Gamma)=(a_{ij}^\sigma)$, where $a^\sigma_{ij}=\sigma(v_iv_j)$ if $v_i\sim v_j $, otherwise, $a^\sigma_{ij}=0$. The eigenvalues of $\Gamma$ are written as $\lambda_1(A(\Gamma))\geq\lambda_{2}(A(\Gamma))\geq\cdots\geq\lambda_n(A(\Gamma))$  in decreasing  order which are the eigenvalues of $A(\Gamma)$ and $\lambda_1(A(\Gamma))$ is the index of $\Gamma$. The index has been extensively studied in the literature, with relevant works including \cite{12,3,9}

\begin{figure}
	\centering
	\ifpdf
	\setlength{\unitlength}{1bp}%
	\begin{picture}(428.31, 150.05)(0,0)
		\put(0,0){\includegraphics{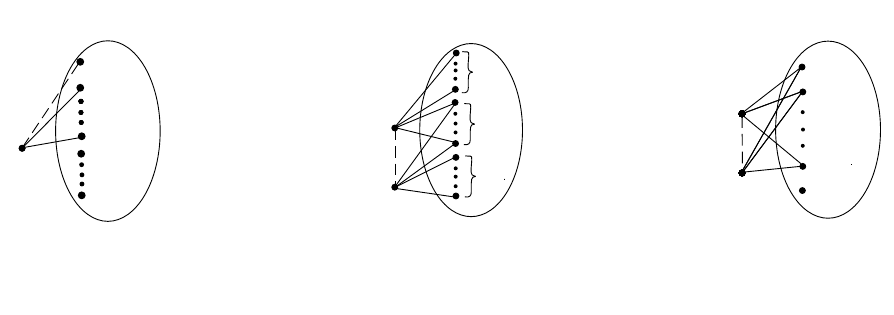}}
		\put(40.65,105.72){\fontsize{11.38}{13.66}\selectfont $v_2$}
		\put(41.68,84.11){\fontsize{11.38}{13.66}\selectfont $v_{t-1}$}
		\put(41.33,73.82){\fontsize{11.38}{13.66}\selectfont $v_t$}
		\put(41.68,54.11){\fontsize{11.38}{13.66}\selectfont $v_{n-1}$}
		\put(40.82,118.58){\fontsize{11.38}{13.66}\selectfont $v_1$}
		\put(1.67,78.57){\fontsize{11.38}{13.66}\selectfont $u$}
		\put(38.02,34.01){\fontsize{11.38}{13.66}\selectfont $\Gamma_{n,t}$}
		\put(16.12,98.53){\fontsize{11.38}{13.66}\selectfont $-$}
		\put(37.13,135.49){\fontsize{11.38}{13.66}\selectfont $K_{n-1}$}
		\put(174.54,87.74){\fontsize{11.38}{13.66}\selectfont $v_1$}
		\put(178.53,74.70){\fontsize{11.38}{13.66}\selectfont $-$}
		\put(212.30,133.68){\fontsize{11.38}{13.66}\selectfont $K_{n-2}$}
		\put(210.80,34.77){\fontsize{11.38}{13.66}\selectfont $\Sigma_{s,t,r}$}
		\put(175.71,57.93){\fontsize{11.38}{13.66}\selectfont $v_2$}
		\put(133.21,8.12){\fontsize{11.38}{13.66}\selectfont Fig.1. \label{Fig1} The signed graphs $\Gamma_{n,t}, \Sigma_{s,t,r}, U_1$.}
		\put(229.95,89.37){\fontsize{11.38}{13.66}\selectfont $t$}
		\put(228.34,113.10){\fontsize{11.38}{13.66}\selectfont $s$}
		\put(231.07,63.32){\fontsize{11.38}{13.66}\selectfont $r$}
		\put(388.85,68.07){\fontsize{11.38}{13.66}\selectfont $v_{n-1}$}
		\put(388.21,115.48){\fontsize{11.38}{13.66}\selectfont $v_3$}
		\put(379.75,133.61){\fontsize{11.38}{13.66}\selectfont $K_{n-2}$}
		\put(388.47,103.18){\fontsize{11.38}{13.66}\selectfont $v_4$}
		\put(387.22,34.80){\fontsize{11.38}{13.66}\selectfont $U_1$}
		\put(388.66,57.24){\fontsize{11.38}{13.66}\selectfont $v_n$}
		\put(342.32,65.28){\fontsize{11.38}{13.66}\selectfont $v_2$}
		\put(345.75,80.49){\fontsize{11.38}{13.66}\selectfont $-$}
		\put(340.71,92.85){\fontsize{11.38}{13.66}\selectfont $v_1$}
	\end{picture}%
	\else
	\setlength{\unitlength}{1bp}%
	\begin{picture}(428.31, 150.05)(0,0)
			\put(0,0){\includegraphics{111.pdf}}
		\put(40.65,105.72){\fontsize{11.38}{13.66}\selectfont $v_2$}
		\put(41.68,84.11){\fontsize{11.38}{13.66}\selectfont $v_{t-1}$}
		\put(41.33,73.82){\fontsize{11.38}{13.66}\selectfont $v_t$}
		\put(41.68,54.11){\fontsize{11.38}{13.66}\selectfont $v_{n-1}$}
		\put(40.82,118.58){\fontsize{11.38}{13.66}\selectfont $v_1$}
		\put(1.67,78.57){\fontsize{11.38}{13.66}\selectfont $u$}
		\put(38.02,34.01){\fontsize{11.38}{13.66}\selectfont $\Gamma_{n,t}$}
		\put(16.12,98.53){\fontsize{11.38}{13.66}\selectfont $-$}
		\put(37.13,135.49){\fontsize{11.38}{13.66}\selectfont $K_{n-1}$}
		\put(174.54,87.74){\fontsize{11.38}{13.66}\selectfont $v_1$}
		\put(178.53,74.70){\fontsize{11.38}{13.66}\selectfont $-$}
		\put(212.30,133.68){\fontsize{11.38}{13.66}\selectfont $K_{n-2}$}
		\put(210.80,34.77){\fontsize{11.38}{13.66}\selectfont $\Sigma_{s,t,r}$}
		\put(175.71,57.93){\fontsize{11.38}{13.66}\selectfont $v_2$}
		\put(133.21,8.12){\fontsize{11.38}{13.66}\selectfont Fig.1. \label{Fig1} The signed graphs $\Gamma_{n,t}, \Sigma_{s,t,r}, U_1$.}
		\put(229.95,89.37){\fontsize{11.38}{13.66}\selectfont $t$}
		\put(228.34,113.10){\fontsize{11.38}{13.66}\selectfont $s$}
		\put(231.07,63.32){\fontsize{11.38}{13.66}\selectfont $r$}
		\put(388.85,68.07){\fontsize{11.38}{13.66}\selectfont $v_{n-1}$}
		\put(388.21,115.48){\fontsize{11.38}{13.66}\selectfont $v_3$}
		\put(379.75,133.61){\fontsize{11.38}{13.66}\selectfont $K_{n-2}$}
		\put(388.47,103.18){\fontsize{11.38}{13.66}\selectfont $v_4$}
		\put(387.22,34.80){\fontsize{11.38}{13.66}\selectfont $U_1$}
		\put(388.66,57.24){\fontsize{11.38}{13.66}\selectfont $v_n$}
		\put(342.32,65.28){\fontsize{11.38}{13.66}\selectfont $v_2$}
		\put(345.75,80.49){\fontsize{11.38}{13.66}\selectfont $-$}
		\put(340.71,92.85){\fontsize{11.38}{13.66}\selectfont $v_1$}
	\end{picture}%
	\fi
\end{figure}

In recent years, the study of spectral Tur\'{a}n-type problems has been expanded from simple undirected graphs to signed graphs. Given a set $\mathcal{F}$ of signed graphs, if a signed graph $\Gamma$ contains no signed subgraph isomorphic to any one in $\mathcal{F}$, then $\Gamma$ is called $\mathcal{F}$-free. Different from aforementioned studies, we focus on signed graphs, and ask what are the maximum spectral radius or index of an $\mathcal{F}$-free signed graph of order $n$. Let $\mathcal{K}_r^-$ and $\mathcal{C}_r^-$ be the sets of all unbalanced signed graphs with underlying graphs $K_r$ and $C_r$, respectively. Chen and Yuan \cite{7} and Wang \cite{k1}  gave the spectral Tur\'{a}n number of $\mathcal{K}_{4}^{-}$ and $\mathcal{K}_{5}^{-}$, respectively, while Xiong and Hou \cite{k2} gave the maximum index for $\mathcal{K}_r^-$--free unbalanced signed graphs. For cycle-related forbidden subgraphs in signed graphs, Wang, Hou and Li \cite{15} determined the spectral Tur\'{a}n number of $\mathcal{C}_{3}^{-}$. The $\mathcal{C}_{4}^{-}$-free unbalanced signed graphs of fixed order with maximum index have been determined by Wang and Lin \cite{16}. Moreover, Wang, Hou and Huang \cite{18} gave the spectral Tur\'{a}n number of $\mathcal{C}_{2k+1}^{-}$, where $3 \leq k \leq n/10-1$. Let $\Gamma=(K_n,H^-)$ be a signed complete graph with a negative edge-induced subgraph $H$. For unbalanced connected signed graphs with $n\geq3$ vertices, Brunetti and Stani\'{c} \cite{4} showed that a signed graph maximizes the index if and only if it is switching isomorphic to $(K_n,K_2^-)$. Let $t\mathcal{C}_{3}^-$ be the set of $t$ unbalanced $C_3$. It is particularly noted that graph $(K_n,K_2^-)$ is the extremal signed graph with the maximum index in the class of graphs that forbid $t$ vertex-disjoint unbalanced $C_3$. Motivated by these works, this paper continues the investigation into the characterization of extremal graphs with the maximum index in the class of $t\mathcal{C}_{3}^-$-free unbalanced signed graph with $t\geq 2$. Notably, we no longer impose the requirement that these $t$ unbalanced triangles are vertex-disjoint. In Fig.\ref {Fig1}, we use dashed lines to represent negative edges and solid lines to represent positive edges. Let $K_{n-1}$ be a complete graph with vertex set $\{v_1,...,v_{n-1}\}$.  $\Gamma_{n,t}$ is a signed graph obtained by appending a new vertex $u$ to $K_{n-1}$ and joining $u$ to $t-1$ vertices $v_1,...,v_{t-1}$, with $uv_1$ being the unique negative edge. The main results of this paper are as follows.

\begin{thm}\label{thm1}
 Let $\Gamma$ be a $t\mathcal{C}_{3}^{-}$-free unbalanced signed graph of order $n$ with maximum index $(t\geq 2, n\geq 6)$.  Then $$\Gamma\cong \begin{cases}\Gamma_{n,t+1}&\text{if}~ 2\leq t\leq n-2,\\\Gamma_{n,n}&\text{if}~ t\geq n-1.\end{cases}$$ 
\end{thm}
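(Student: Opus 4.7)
The plan is to split on the range of $t$. For $t\ge n-1$, since $\Gamma_{n,n}=(K_n,K_2^-)$ contains only the $n-2\le t-1$ unbalanced triangles $uv_1v_i$ with $2\le i\le n-1$, it is $t\mathcal{C}_{3}^-$-free; by Brunetti and Stani\'c~\cite{4}, $(K_n,K_2^-)$ already maximizes the index over all unbalanced signed graphs of order $n\ge 3$, so it is extremal in our restricted class as well, giving $\Gamma\cong\Gamma_{n,n}$.

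For $2\le t\le n-2$, let $\Gamma$ be an extremal $t\mathcal{C}_{3}^-$-free unbalanced signed graph and write $\lambda:=\lambda_1(\Gamma)$. A direct count shows that $\Gamma_{n,t+1}$ has exactly the $t-1$ unbalanced triangles $uv_1v_i$ for $2\le i\le t$, so it is $t\mathcal{C}_{3}^-$-free and hence $\lambda\ge\lambda_1(\Gamma_{n,t+1})$. I would then switch $\Gamma$ so that an eigenvector $\bix$ associated with $\lambda$ is non-negative and, by a Perron--Frobenius argument applied to the positive part of the adjacency matrix, strictly positive on the connected component containing its support.

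The structural core of the argument then consists of three steps. \emph{(i) Frustration index one.} Show that, after further switching, $\Gamma$ has exactly one negative edge $e=uv_1$: any non-minimal configuration of negative edges either admits a vertex-switching that strictly reduces their number (while preserving the spectrum) or a local edge-exchange that strictly raises $\bix^{T}A(\Gamma)\bix/\bix^{T}\bix$ while keeping $\Gamma$ $t\mathcal{C}_{3}^-$-free, contradicting extremality. \emph{(ii) Saturation of the underlying graph.} Given the unique negative edge $e=uv_1$, for any non-adjacent pair $\{w_1,w_2\}\subseteq V(\Gamma)\setminus\{u\}$, adding the positive edge $w_1w_2$ creates a new unbalanced triangle only if that triangle contains $e$, which forces $v_1\in\{w_1,w_2\}$ and $u$ to be a common neighbor; in all other subcases the addition is harmless and strictly increases $\lambda$ by the standard Perron edge-addition argument, contradicting extremality, while the remaining subcase ($w_1=v_1$ with $w_2\in N(u)$) is handled by a simultaneous swap that exchanges one unbalanced triangle for another and still strictly raises $\bix^{T}A\bix$. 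Hence $V(\Gamma)\setminus\{u\}$ induces $K_{n-1}$. \emph{(iii) Degree of $u$.} With $V(\Gamma)\setminus\{u\}=V(K_{n-1})$ and unique negative edge $uv_1$, the unbalanced triangles are precisely $uv_1v_i$ with $v_i\in N_\Gamma(u)\cap V(K_{n-1})\setminus\{v_1\}$, so there are $\deg_\Gamma(u)-1$ of them, forcing $\deg_\Gamma(u)\le t$; if $\deg_\Gamma(u)<t$, then adding a positive edge $uv_j$ for some $v_j\in V(K_{n-1})\setminus N_\Gamma(u)$ creates exactly one new unbalanced triangle, so the resulting graph remains $t\mathcal{C}_{3}^-$-free while $\lambda$ strictly increases, a contradiction. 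Thus $\deg_\Gamma(u)=t$ and $\Gamma\cong\Gamma_{n,t+1}$.

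The main obstacle will be step (i): because the underlying graph is a priori unknown, the switching and exchange arguments must be coordinated with both the graph structure and the precise count of distinct unbalanced triangles, whereas Brunetti and Stani\'c~\cite{4} supply the analogous reduction only when the underlying graph is already $K_n$. In particular, one has to rule out dense alternative configurations such as $\Sigma_{s,t,r}$ and $U_1$ of Fig.~1, which carry more than one negative edge and could a priori compete with $\Gamma_{n,t+1}$.
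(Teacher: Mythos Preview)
Your high-level strategy---reduce to a single negative edge, then saturate the underlying graph, then pin down the degree of the special vertex---matches the paper's, and your treatment of $t\ge n-1$ via Brunetti--Stani\'c is actually cleaner than what the paper does (the paper re-derives the one-negative-edge structure and then uses the monotonicity $\lambda_1(\Gamma_{n,3})<\cdots<\lambda_1(\Gamma_{n,n})$ coming from Lemma~\ref{l6}). Where you diverge is the structural step: the paper removes \emph{both} endpoints $v_1,v_2$ of the negative edge, shows the remaining $n-2$ vertices form $(K_{n-2},+)$, fixes $|N(v_1)\cap N(v_2)|=t-1$, and only then---after ordering $x_1\le x_2$---forces $d(v_1)=t$. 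You instead remove a single endpoint $u$ and aim directly for $V\setminus\{u\}\cong K_{n-1}$.

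That shortcut is where the real gap lies. First, the conclusion that $V(\Gamma)\setminus\{u\}$ induces $K_{n-1}$ is false for the wrong choice of endpoint: in $\Gamma_{n,t+1}$ itself, removing the high-degree end $v_1$ of the negative edge leaves a non-clique whenever $t<n-1$. So you must commit in advance to $u$ being the endpoint with the smaller eigenvector entry, which you never do. Second, the ``simultaneous swap'' in the bad subcase $w_1=v_1$, $w_2\in N(u)$ has to be made explicit (the natural move is to delete $uw_2$ and add $v_1w_2$) and shown to strictly raise the Rayleigh quotient; that needs $x_{v_1}\ge x_u$ together with an equality analysis---exactly the bookkeeping the paper carries out. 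Third, Perron--Frobenius does not give strict positivity of a $\lambda_1$-eigenvector for a signed adjacency matrix; the paper proves instead (using $\lambda_1\ge n-2$) that at most one entry of $X$ can vanish and then splits into the cases ``one zero'' versus ``$X>0$''. Finally, a factual slip: $\Sigma_{s,t,r}$ and $U_1$ each carry exactly \emph{one} negative edge $v_1v_2$; they are the competitors for the second-largest index in Theorem~\ref{thm2}, not obstructions to your step~(i).
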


\begin{thm}\label{thm2}
	For integers $n,t$ with  $t\geq 3$ and $n\geq 9$. Let $\Gamma$ be a  $t\mathcal{C}_{3}^{-}$-free unbalanced signed graph of order $n$ with maximum index such that $\Gamma$ is not switching isomorphic to $\Gamma_{n,t+1}$ for $3\leq t\leq n-2$ or $\Gamma_{n,n}$ for $t\geq n-1$. Then
	
	(i) If $3\leq t\leq\left\lfloor\frac{n}{2}\right\rfloor$, then $\Gamma\cong \Gamma_{n,t}$,
	
	(ii) If $\left\lfloor\frac{n}{2}\right\rfloor+1\leq t\leq n-3$, then $\Gamma\cong \Sigma_{1,t-1,n-t-2}$,
	
	(iii) If $t=n-2$, then $\Gamma\cong \Gamma_{n,n-2}$,
	
	(iv) If $t\geq n-1$, then  $\Gamma\cong \Gamma_{n,n-1}$.
\end{thm}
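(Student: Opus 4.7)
The plan is to start from the characterization of the top extremal graph in Theorem 1 and reuse its local-perturbation machinery, now sharpened to identify the runner-up once $\Gamma_{n,t+1}$ (resp.\ $\Gamma_{n,n}$) is excluded. Since switching preserves the spectrum, by Zaslavsky's switching I would first place $\Gamma$ in a canonical form in which the set of negative edges has the simplest possible shape—typically a single negative edge, or a negative star centred at one vertex.

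Edge- and sign-maximality then follow from the standard perturbation argument: inserting a missing positive edge, or resigning a negative edge as positive, strictly increases the index unless doing so either creates a $t$-th unbalanced triangle or produces a copy of $\Gamma_{n,t+1}$. Combined with the bound of at most $t-1$ unbalanced triangles, this narrows the underlying configuration down to two dense families: (a) the $\Gamma_{n,k}$-family, in which a single extra vertex is joined to $K_{n-1}$ through one negative edge and $k-2$ positive edges; and (b) the $\Sigma_{s,t',r}$-family, in which two adjacent vertices $v_1,v_2$ (joined by the unique negative edge) are attached to $K_{n-2}$, with the clique split into $s$ common neighbours and $t',r$ private neighbours. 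Within (a), only $k=t$ can compete once $\Gamma_{n,t+1}$ is excluded; within (b), the eigenvector equation together with the $t\mathcal{C}_{3}^{-}$-free constraint forces $s=1$ and, by a standard equitable-partition shifting argument between the two private neighbourhoods, $(t',r) = (t-1,\, n-t-2)$ whenever this is feasible. Cases (iii) and (iv) then follow at once: when $t\geq n-2$ the $\Sigma$-candidate degenerates because $n-t-2\leq 0$, and $\Gamma_{n,t}$ (resp.\ $\Gamma_{n,n-1}$) is the only remaining candidate.

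The core technical step—and the main obstacle—is a direct spectral comparison of $\lambda_1(\Gamma_{n,t})$ with $\lambda_1(\Sigma_{1,t-1,n-t-2})$ throughout the admissible range $3\leq t\leq n-3$. Using equitable partitions I would write down small quotient matrices for each graph and then show that the difference of their characteristic polynomials, viewed as a polynomial in $t$ and in the eigenvalue parameter, changes sign exactly at the threshold $t=\lfloor n/2\rfloor$. Carrying this out cleanly requires a monotonicity argument on a surplus polynomial, together with careful handling of the parity of $n$ and of the boundary values $t=\lfloor n/2\rfloor$ and $t=n-3$. An additional subtlety throughout is that, unlike in the ordinary-graph setting, the principal eigenvector of a signed graph need not be positive; tracking which entries are positive and which are negative—using the fixed location of the unique negative edge—is the main bookkeeping burden in both the perturbation step and the final index comparison.
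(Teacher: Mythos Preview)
Your overall plan is the paper's plan: switch to a non-negative principal eigenvector, reduce to a single negative edge $v_1v_2$, use edge-maximality to force the complement of $\{v_1,v_2\}$ (or of $\{v_1\}$) to be an all-positive clique, narrow down to the $\Gamma_{n,k}$ and $\Sigma$ families, and finish with an equitable-quotient comparison of $\lambda_1(\Gamma_{n,t})$ against $\lambda_1(\Sigma_{1,t-1,n-t-2})$ that flips sign at $t=\lfloor n/2\rfloor$. Two corrections are needed, one cosmetic and one substantive.

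First, after the switching step the eigenvector is non-negative, so the relevant dichotomy is not positive-versus-negative entries but \emph{zero versus positive}. This is exactly the mechanism that separates your families (a) and (b): a zero entry (necessarily at $v_1$) forces $\Gamma'[V\setminus\{v_1\}]\cong (K_{n-1},+)$ and yields $\Gamma_{n,t}$, while $X>0$ forces $\Gamma'[V\setminus\{v_1,v_2\}]\cong (K_{n-2},+)$ and yields the $\Sigma$ configuration. Also, in the paper's $\Sigma_{s,t',r}$ the \emph{middle} parameter is the number of common neighbours of $v_1,v_2$; the $t\mathcal{C}_3^-$-free constraint forces that to be exactly $t-1$, and your shifting argument then drives the private neighbourhood on the $x_1$-side down to size $1$, giving $\Sigma_{1,\,t-1,\,n-t-2}$. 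Your description has the roles of the parameters swapped.

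Second, case (iii) does \emph{not} follow ``at once.'' When $t=n-2$ the candidate $\Sigma_{1,n-3,0}$ is precisely the excluded graph $\Gamma_{n,n-1}$, so in the $X>0$ branch the usual ``add the missing edge to $v_1$ or $v_2$'' perturbation is blocked: adding it would either create an $(n-2)$-th unbalanced triangle or produce $\Gamma_{n,n-1}$. The branch therefore terminates at a genuinely new candidate $U_1$, in which one vertex of the $K_{n-2}$ is adjacent to neither $v_1$ nor $v_2$. Settling (iii) then requires a separate quotient-matrix comparison showing $\lambda_1(\Gamma_{n,n-2})>\lambda_1(U_1)$; this is an additional lemma you have not accounted for.
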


\section{Preliminaries}\label{se2}
Let $M$ be a real symmetric matrix with block form $M= [M_{ij}]$, and $q_{ij}$ denote the average row sum of $M_{ij}$. The matrix $Q=(q_{ij})$ is the quotient matrix of $M$. Furthermore, $Q$ is referred to as an equitable quotient matrix if every block $M_{ij}$ has a constant row sum. Let Spec$(Q)=\{\lambda_1^{[t_1]},...,\lambda_k^{[t_k]}\}$ represent the spectrum of $Q$, where 
$\lambda_i$ is an eigenvalue with multiplicity $t_i$ for $1 \leq i \leq k$. Let $P_{Q}(\lambda)=det(\lambda I-Q)$ denote the characteristic polynomial of $Q$.
\begin{lemma}\label{l}\cite{21}
	There are two kinds of eigenvalues of the real symmetric matrix $M$.

	(i) The eigenvalues that match the eigenvalues of $Q$.
	
	(ii) The eigenvalues of $M$ not in Spec$(Q)$  that are unchanged when $\alpha $J is added to block $M_{ij}$  for every $1\leq i,j \leq m$, where  $\alpha$ is any constant. Moreover, $\lambda_1(M)=\lambda_1(Q)$ when $M$ is irreducible and nonnegative.
\end{lemma}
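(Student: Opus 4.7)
The argument is organized around the characteristic matrix $S\in\{0,1\}^{n\times m}$ whose $i$-th column is the indicator vector $\chi_i$ of $V_i$. First I would establish the pivotal identity $MS=SQ$: the $(u,j)$-entry of $MS$, with $u\in V_i$, equals the row sum of the block $M_{ij}$ in row $u$, and equitability of the partition says exactly that this equals $q_{ij}$ independent of $u$. Because the columns $\chi_1,\dots,\chi_m$ have pairwise disjoint supports, $S$ has full column rank $m$, and $U:=\mathrm{col}(S)$ is an $m$-dimensional $M$-invariant subspace.

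For part (i), any eigenpair $Qx=\lambda x$ with $x\ne 0$ yields $M(Sx)=SQx=\lambda(Sx)$, and $Sx\ne 0$ since $S$ has full column rank, so $\lambda$ is an eigenvalue of $M$. Moreover, in the basis $\{\chi_i\}$ of $U$ the restriction $M|_U$ is represented by $Q$, so $\mathrm{Spec}(M|_U)=\mathrm{Spec}(Q)$ as multisets.

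For part (ii), symmetry of $M$ makes $W:=U^\perp$ also $M$-invariant, and $M$ orthogonally decomposes as $M|_U\oplus M|_W$. Every eigenvalue $\lambda$ of $M$ outside $\mathrm{Spec}(Q)$ must therefore arise from $M|_W$, with an eigenvector $v\in W$ characterized by $\chi_j^\top v=0$ for each $j$. The perturbation ``adding $\alpha J$ to the block $M_{ij}$'' replaces $M$ by $M+\alpha\chi_i\chi_j^\top$; applied to $v\in W$ this perturbation vanishes, since $\alpha\chi_i\chi_j^\top v=\alpha\chi_i(\chi_j^\top v)=0$. Hence $(M+\alpha\chi_i\chi_j^\top)v=\lambda v$, showing that $\lambda$ persists under every such perturbation, as claimed.

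For the final assertion, assume $M$ is irreducible and entrywise nonnegative. Perron--Frobenius furnishes a strictly positive eigenvector $w>0$ with $Mw=\lambda_1(M)w$. Transposing $MS=SQ$ and using $M^\top=M$ gives $S^\top M=Q^\top S^\top$, whence $Q^\top(S^\top w)=\lambda_1(M)(S^\top w)$; since $S^\top w>0$, this shows $\lambda_1(M)\in\mathrm{Spec}(Q^\top)=\mathrm{Spec}(Q)$, so $\lambda_1(M)\le\rho(Q)=\lambda_1(Q)$ because $Q$ is nonnegative. Part (i) applied to $\lambda_1(Q)$ gives the reverse inequality, yielding equality. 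The main delicate point is the multiplicity bookkeeping underlying part (ii): one must genuinely verify that $S$ has full column rank so that $\mathrm{Spec}(M|_U)$ matches $\mathrm{Spec}(Q)$ as a multiset, ensuring every eigenvalue of $M$ outside $\mathrm{Spec}(Q)$ indeed lives on $W$; without this bookkeeping, an eigenvalue shared between $Q$ and $M|_W$ could be misclassified and the perturbation-invariance claim lose its content.
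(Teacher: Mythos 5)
The paper gives no proof of this lemma; it is imported verbatim from Brouwer--Haemers \cite{21}, so there is no internal argument to compare against, and your proof is correct. It is the standard argument for equitable partitions: the identity $MS=SQ$ for the characteristic matrix $S$, the resulting $M$-invariance of $\mathrm{col}(S)$ and (by symmetry of $M$) of its orthogonal complement, the annihilation of eigenvectors lying in that complement by each perturbation $\alpha\chi_i\chi_j^{\top}$, and the Perron--Frobenius step giving $\lambda_1(M)\in\mathrm{Spec}(Q)$ and hence $\lambda_1(M)=\lambda_1(Q)$ in the irreducible nonnegative case.
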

\begin{lemma}\label{l1} \cite{k2}

	(i) $\lambda_1(A(\Gamma_{n,t}))$ is the largest root of $g_{n,t}(\lambda)=0$, where
	\begin{center}
		$g_{n,t}(\lambda)=\lambda^3-(n-3)\lambda^2-(n+t-3)\lambda-t^2+(n+4)t-n-7$.
	\end{center}

	(ii) $n-2\leq\lambda_1(A(\Gamma_{n,t}))<n-1$, with left equality if and only if $t=3$.
\end{lemma}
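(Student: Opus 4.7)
The plan is to prove (i) by exhibiting an equitable vertex partition of $\Gamma_{n,t}$ whose associated quotient matrix has characteristic polynomial containing $g_{n,t}$ as a factor, and then to prove (ii) by evaluating $g_{n,t}$ at $\lambda=n-2$ and $\lambda=n-1$ to pin down the location of the largest root.

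For the partition, take $V_1=\{v_1\}$, $V_2=\{v_2,\dots,v_{t-1}\}$, $V_3=\{v_t,\dots,v_{n-1}\}$, $V_4=\{u\}$ (for $t\ge 3$; the boundary case $t=2$ uses the analogous $3\times 3$ partition with $V_2=\emptyset$). Counting signed edges between parts shows this partition is equitable, with quotient matrix
\[
Q=\begin{pmatrix} 0 & t-2 & n-t & -1 \\ 1 & t-3 & n-t & 1 \\ 1 & t-2 & n-t-1 & 0 \\ -1 & t-2 & 0 & 0 \end{pmatrix}.
\]
A short calculation shows that $\mathbf{y}=((t-2)(n-t),\,n-t,\,-2(t-2),\,0)^{\top}$ satisfies $Q\mathbf{y}=-\mathbf{y}$, so $\lambda+1$ divides $\det(\lambda I-Q)$; cofactor expansion followed by polynomial division gives $\det(\lambda I-Q)=(\lambda+1)\,g_{n,t}(\lambda)$. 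By Lemma~\ref{l}, every root of $\det(\lambda I-Q)$ lies in the spectrum of $A(\Gamma_{n,t})$, while the remaining $n-4$ eigenvalues come from vectors supported on $V_2\cup V_3$ with zero sum on each of $V_2$ and $V_3$; since the internal submatrices on $V_2$ and $V_3$ are each $J-I$ with all positive entries, every such vector is an eigenvector for $-1$. Thus the full spectrum of $A(\Gamma_{n,t})$ consists of the three roots of $g_{n,t}$ together with $-1$ of multiplicity $n-3$.

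For (ii), direct substitution yields
\[
g_{n,t}(n-1)=n^2-n-8+5t-t^2, \qquad g_{n,t}(n-2)=-(t-3)^2.
\]
Viewed as a concave quadratic in $t$, the first expression is minimized on $1\le t\le n$ at the endpoints $t=1$ and $t=n$, where it equals $n^2-n-4$ and $4n-8$ respectively, both positive for $n\ge 3$; hence $g_{n,t}(n-1)>0$. The second is $\le 0$ with equality iff $t=3$. Since $g_{n,t}$ is a monic cubic with $g_{n,t}(+\infty)=+\infty$, the sign pattern forces a root of $g_{n,t}$ to sit in $[n-2,n-1)$, and the standard sign diagram of a monic cubic rules out this root being a middle root, so it must be the largest root. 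Because that root is larger than $-1$, it is precisely $\lambda_1(A(\Gamma_{n,t}))$, giving (i) and (ii) simultaneously, with left equality in (ii) iff $t=3$.

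The main obstacle is the algebra of the $4\times 4$ determinant: $Q$ mixes signs and several combinations of $n$, $t$, $\lambda$, and the cubic $g_{n,t}$ only emerges cleanly after $(\lambda+1)$ is divided out. Guessing the eigenvector $\mathbf{y}$ in advance turns the computation into a degree-three coefficient match, which is far less error-prone than a raw expansion; everything else in the proof is routine arithmetic and sign-counting on a monic cubic.
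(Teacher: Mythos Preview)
The paper does not prove this lemma; it is quoted from \cite{k2} and used as a black box. So there is no ``paper's own proof'' to compare against. That said, your approach is exactly the one the paper itself employs for its other spectral computations (Lemmas~\ref{lq1} and~\ref{lqq1}): exhibit an equitable partition, write down the quotient matrix, and invoke Lemma~\ref{l} to split the spectrum into the quotient eigenvalues plus a batch of $-1$'s. Your quotient matrix $Q$ is correct, the eigenvector $\mathbf{y}$ for $-1$ checks out, and the evaluations $g_{n,t}(n-1)=n^2-n-8+5t-t^2$ and $g_{n,t}(n-2)=-(t-3)^2$ are both right.

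One small gap: the phrase ``the standard sign diagram of a monic cubic rules out this root being a middle root'' is too vague as written. From $g_{n,t}(n-2)\le 0$ and $g_{n,t}(n-1)>0$ you get a root in $[n-2,n-1)$, but you still owe a line explaining why no root lies in $[n-1,\infty)$. The cleanest fix is to note that the three roots of $g_{n,t}$ sum to $n-3$ (the $\lambda^2$-coefficient), so if two roots were $\ge n-1$ then together with the root $\ge n-2$ the sum would exceed $3(n-2)>n-3$; alternatively, check $g_{n,t}'(n-1)=n^2+n-t>0$ and conclude $g_{n,t}$ is increasing on $[n-1,\infty)$. Either sentence completes the argument.
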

 The matrix $J_{r\times s}$ is an all-one matrix of size $r \times s$, and when $r=s$ it is denoted by $J_r$. Also, we use $j_k=(1,\ldots,1)^T\in R^k$.
\begin{lemma}\label{lq1}
	Let $n\geq 9$ and $t$ be  positive integers with $3\leq t\leq n-3$.  Let $\Gamma_{n,t}$ and $\Sigma_{1,t-1,n-t-2}$ be the signed graphs depicted in Fig.\ref{Fig1}. Then 
	$$ \begin{cases}\lambda_1(A(\Gamma_{n,t}))>\lambda_1(A(\Sigma_{1,t-1,n-t-2}))&\text{if}~ 3\leq t\leq\left\lfloor\frac{n}{2}\right\rfloor,\\\lambda_1(A(\Sigma_{1,t-1,n-t-2}))>\lambda_1(A(\Gamma_{n,t}))&\text{if}~ \left\lfloor\frac{n}{2}\right\rfloor+1\leq t\leq n-3.\end{cases}$$ 
\end{lemma}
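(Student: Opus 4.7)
The natural strategy is to realize $\lambda_1(A(\Sigma_{1,t-1,n-t-2}))$ as the largest root of the characteristic polynomial of an equitable quotient matrix, in the spirit of Lemma~\ref{l}, and then compare that polynomial against the cubic $g_{n,t}$ from Lemma~\ref{l1}(i). For $\Sigma_{1,t-1,n-t-2}$, I would partition the vertex set into five blocks: $\{v_1\}$, $\{v_2\}$, the unique vertex of $K_{n-2}$ adjacent only to $v_1$, the $t-1$ vertices of $K_{n-2}$ adjacent to both $v_1$ and $v_2$, and the $n-t-2$ vertices of $K_{n-2}$ adjacent only to $v_2$. This partition is equitable, and, as with $\Gamma_{n,t}$ in Lemma~\ref{l1}, one checks that the eigenvector belonging to the index is constant on the blocks, so by Lemma~\ref{l} the index coincides with the largest eigenvalue of the resulting $5\times 5$ quotient matrix $Q_\Sigma$.

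Next I would compute $P_{Q_\Sigma}(\lambda)$ and, after dividing out any spurious linear factor that appears (analogous to the factor $\lambda+1$ one obtains in the $\Gamma_{n,t}$ computation), extract a reduced polynomial $h_{n,t}(\lambda)$ whose largest root is $\lambda_1(A(\Sigma_{1,t-1,n-t-2}))$. Rather than solving $h_{n,t}=0$ directly, I would set $\lambda^{*}:=\lambda_1(A(\Gamma_{n,t}))$ and evaluate $h_{n,t}(\lambda^{*})$, using the identity $g_{n,t}(\lambda^{*})=0$ to reduce cubic and higher powers of $\lambda^{*}$ to expressions linear or quadratic in $\lambda^{*}$. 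The bound $n-2\le\lambda^{*}<n-1$ from Lemma~\ref{l1}(ii) places $\lambda^{*}$ well to the right of all non-leading roots of both polynomials, so the sign of $h_{n,t}(\lambda^{*})$ decides whether $\lambda^{*}<\lambda_1(A(\Sigma_{1,t-1,n-t-2}))$ or the reverse.

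The hoped-for structure is that $h_{n,t}(\lambda^{*})$ factors as a quantity that is positive on the interval $\lambda^{*}\in[n-2,n-1)$ (for $n\ge 9$) times a linear expression in $t$, roughly of the shape $n-2t-c$ for a small constant $c$, whose sign flips precisely at $t=\lfloor n/2\rfloor$. Positivity of the controlled factor together with the sign of $n-2t-c$ then yields the two cases of the lemma. The main obstacle is the algebraic bookkeeping: the $5\times 5$ determinant for $Q_\Sigma$ and the subsequent reduction modulo $g_{n,t}$ generate cumbersome polynomials in $n$ and $t$, and isolating a clean linear-in-$t$ factor that drives the crossover at $\lfloor n/2\rfloor$ will require careful grouping. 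A direct check at the crossover, perhaps by substituting $t=\lfloor n/2\rfloor$ and $t=\lfloor n/2\rfloor+1$ and using sharper estimates for $\lambda^{*}$ derived from $g_{n,t}$, may be needed to handle the boundary values cleanly.
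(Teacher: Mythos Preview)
Your plan matches the paper's approach almost exactly: the same five-block equitable partition, the same use of Lemma~\ref{l} to identify $\lambda_1(A(\Sigma_{1,t-1,n-t-2}))$ with the top eigenvalue of the $5\times5$ quotient, and the same idea of comparing the two characteristic polynomials at $\lambda^{*}=\lambda_1(A(\Gamma_{n,t}))$ using the bounds $n-2\le\lambda^{*}<n-1$ from Lemma~\ref{l1}(ii).

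Two small course corrections relative to what the paper actually does. First, the quintic $P_{Q_\Sigma}$ does not shed a linear factor; instead the paper multiplies $g_{n,t}$ \emph{up} by $(\lambda+1)^2$ and subtracts, obtaining a cubic difference $f(\lambda)-P_{Q_\Sigma}(\lambda)$ which at $\lambda^{*}$ equals $-P_{Q_\Sigma}(\lambda^{*})$. Second, no clean linear-in-$t$ factor of the form $n-2t-c$ emerges; the paper simply bounds that cubic at $\lambda^{*}$ using $\lambda^{*}\ge n-2$ (for $t\ge\lfloor n/2\rfloor+1$) and $\lambda^{*}<n-1$ (for $t\le\lfloor n/2\rfloor$), after first verifying---via derivative checks at $n-3$---that $P_{Q_\Sigma}$ is strictly increasing on $[n-3,\infty)$, so the sign of $P_{Q_\Sigma}(\lambda^{*})$ really does locate its largest root relative to $\lambda^{*}$. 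Your sketch implicitly assumes this monotonicity (``$\lambda^{*}$ well to the right of all non-leading roots''), but it requires the explicit check.
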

\begin{proof}
Without loss of generality, assume that $v_4$ is a vertex in the complete graph $K_{n-2}$ that is not adjacent to $v_2$ in $\Sigma_{1,t-1,n-t-2}$. We first present $A(\Sigma_{1,t-1,n-t-2})$ and its corresponding quotient matrix $Q_1$ based on the vertex partition $V_1=\{v_1\}$, $V_2=\{v_2\}$, $V_3=\{v_4\}$, $V_4=\{v_3,v_5,...,v_{t+2}\}$ and $V_5=\{v_{t+3},...,v_n\}$ as follows
\begin{center}
 $\begin{gathered} A(\Sigma_{1,t-1,n-t-2})= \begin{bmatrix} 0 & -1 & 1 & j_{t-1}^T & \bf{0}_{n-t-2}^T \\ -1 & 0 & 0 & j_{t-1}^T & j_{n-t-2}^T \\ 1 & 0 & 0 & j_{t-1}^T & j_{n-t-2}^T \\ j_{t-1} & j_{t-1} & j_{t-1} & \left(J-I\right)_{t-1} & J_{(t-1)\times(n-t-2)} \\ \bf{0}_{n-t-2} & j_{n-t-2} & j_{n-t-2} & J_{(n-t-2)\times(t-1)} & \left(J-I\right)_{n-t-2} \end{bmatrix} \end{gathered}$
\end{center}
and
\begin{center}
$Q_1= \begin{bmatrix} 0 & -1 & 1 & t-1 & 0 \\ -1 & 0 & 0 & t-1 & n-t-2 \\ 1 & 0 & 0 & t-1 & n-t-2 \\ 1 & 1 & 1 & t-2 & n-t-2 \\ 0 & 1 & 1 & t-1 & n-t-3 \end{bmatrix}.$
\end{center}
 Note that the characteristic polynomial of $Q_1$ is $P_{Q_1}(\lambda)=\lambda^5+(5-n)\lambda^4+(9-3n-t)\lambda^3+(nt-n-t^2-2t-1)\lambda^2+(2nt+4n-2t^2-2t-16)\lambda+4n-12$. Adding $\alpha $J to the blocks of $A(\Sigma_{1,t-1,n-t-2})$, where $\alpha $ is constant, then
\begin{center}
$A_1= \begin{bmatrix} 0 & 0 & 0 & \bf{0}^T & \bf{0}^T \\ 0 & 0 & 0 & \bf{0}^T & \bf{0}^T \\ 0 & 0 & 0 & \bf{0}^T & \bf{0}^T \\ \bf{0} & \bf{0} & \bf{0} & -I_{t-1} & \bf{0}^T \\ \bf{0} & \bf{0} & \bf{0} & \bf{0} & -I_{n-t-2} \end{bmatrix}.$
\end{center}
 Since $\lambda_1(Q_1)>0$ and Spec$(A_1)$$=\begin{Bmatrix}-1^{[n-3]},0^{[3]}\end{Bmatrix}$,  $\lambda_1(A(\Sigma_{1,t-1,n-t-2}))=\lambda_1(Q_1)$. Note that 
 \begin{align*}
 P_{Q_1}^{(1)}(n-3)&=n^4-13n^3+(61-t)n^2+(-2t^2+10t-127)n+4t^2-17t+98\\&\geq n^4-16n^3+76n^2-132n+185,\\
 	P_{Q_1}^{(2)}(n-3)&=8n^3-66n^2+(178-4t)n-2t^2+14t-164\\&\geq8n^3-66n^2+(178-4(n-3))n-2(n-3)^2+14\times3-164\\&=8n^3-72n^2+202n-140>0,\\
 	P_{Q_1}^{(3)}(n-3)&=36n^2-186n-6t+234\\&\geq36n^2-186n-6(n-3)+234\\&=6(6n^2-32n+42)>0,\\
 P_{Q_1}^{(4)}(n-3)&=96n-240>0.
 \end{align*}
 Since  $P_{Q_1}^{(4-i)}(\lambda)$ is strictly increasing for $\lambda\geq n-3$ as $P_{Q_1}^{(5-i)}(n-3)>0$ for $i=1,2,3$. Let $h(n)=n^4-16n^3+76n^2-132n+185$. Note that 
 \begin{align*}
 &h^{(1)}(n)=4n^3-48n^2+152n-132,\\
 &h^{(2)}(n)=12n^2-96n+152,\\
 &h^{(3)}(n)=24n-96.
 \end{align*}
 Obviously, $h^{(3)}(n)>0$ for $n\geq 9$. Thus, $h^{(2)}(n)\geq h^{(2)}(9)=260>0$ and $h^{(1)}(n)\geq h^{(1)}(9)=264>0$. Hence, $h(n)$ is a monotone increasing for $n\geq 9$ and $h(n)\geq h(9)=50>0$. This implies that $P_{Q_1}^{(1)}(n-3)>0$ and $P_{Q_1}(\lambda)$ is  strictly increasing for $\lambda\geq n-3$. By Lemma \ref{l1},  $\lambda_1(A(\Gamma_{n,t}))$ is the largest root of $g_{n,t}(\lambda)=0$. Let $f(\lambda)=(\lambda+1)^2g_{n,t}(\lambda)$, then 
 \begin{center}
 	$f(\lambda)-P_{Q_1}(\lambda)=\lambda^3+(3+4t-3n)\lambda^2+(5+9t-7n)\lambda+(t-5)(n-t-1)$.
 \end{center}
 Set $\lambda_1=\lambda_1(A(\Gamma_{n,t}))$, we can obtain that
 \begin{center}
 	$0-P_{Q_1}(\lambda_1)=\lambda_1^3+(3+4t-3n)\lambda_1^2+(5+9t-7n)\lambda_1+(t-5)(n-t-1)$.
 \end{center}
For $\left\lfloor\frac{n}{2}\right\rfloor+1\leq t\leq n-3$, we have
 \begin{align*}
	-P_{Q_1}(\lambda_1)&\geq(n-2)^3+(3+4t-3n)(n-2)^2+(5+9t-7n)(n-2)+(t-5)(n-t-1)\\&=-2n^3+(2+4t)n^2-(2-6t)n-(t-1)^2\\&\geq-2n^3+(2+4(\lfloor\frac{n}{2}\rfloor+1))n^2-(2-6(\lfloor\frac{n}{2}\rfloor+1))n-(n-4)^2\\&=8n^2+12n-16>0.
 \end{align*}
 Thus, $P_{Q_1}(\lambda_1)<0$. Since $\lambda_1>n-3$ and $P_{Q_1}(\lambda)$ is  strictly increasing for $\lambda\geq n-3$, we have 	$\lambda_1(A(\Sigma_{1,t-1,n-t-2}))>\lambda_1(A(\Gamma_{n,t}))$ for $\left\lfloor\frac{n}{2}\right\rfloor+1\leq t\leq n-3$.
 For  $3\leq t\leq\left\lfloor\frac{n}{2}\right\rfloor$, similarly, 
 \begin{align*}
 	-P_{Q_1}(\lambda_1)&<(n-1)^3+(3+4t-3n)(n-1)^2+(5+9t-7n)(n-1)+(t-5)(n-t-1)\\&=-2n^3+(4t-1)n^2+(2t+1)n-t^2-t+2<0.
 \end{align*}
 Hence, $P_{Q_1}(\lambda_1)>0$. Since $\lambda_1>n-3$ and $P_{Q_1}(\lambda)$ is  strictly increasing for $\lambda\geq n-3$, we have 	$\lambda_1(A(\Sigma_{1,t-1,n-t-2}))<\lambda_1(A(\Gamma_{n,t}))$ 
for $3\leq t\leq\left\lfloor\frac{n}{2}\right\rfloor$. The proof is completed.
\end{proof}
\begin{lemma}\label{lqq1}
	Let $n\geq 9$  be a positive integer. Let $\Gamma_{n,n-2}$ and $U_1$ be the graphs depicted in Fig.\ref {Fig1}. Then 
\begin{center}
$\lambda_1(A(\Gamma_{n,n-2}))>\lambda_1(A(U_1))$.
\end{center}
\end{lemma}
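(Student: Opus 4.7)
The plan is to mirror the strategy of Lemma~\ref{lq1}. First, I would choose an equitable partition of $V(U_1)$ adapted to its symmetries, separating the two ``distinguished'' vertices $v_1, v_2$ (joined by the unique negative edge) from the $K_{n-2}$ induced on $\{v_3,\ldots,v_n\}$, and further splitting off within $K_{n-2}$ any vertex whose neighbourhood with respect to $\{v_1,v_2\}$ differs from the rest. This gives a small (likely $3 \times 3$ or $4 \times 4$) quotient matrix $Q_2$. After writing out $A(U_1)$ in block form and subtracting the appropriate rank-one blocks $\alpha J$, the residual matrix $A_2$ is block-diagonal with blocks of the form $-I$ (on the classes of size $>1$) and zeros (on the singleton classes). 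Since its spectrum consists only of $0$ and $-1$ while $\lambda_1(Q_2)>0$, Lemma~\ref{l} yields $\lambda_1(A(U_1))=\lambda_1(Q_2)$.

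Next I would compute $P_{Q_2}(\lambda)$ and verify that $P_{Q_2}(\lambda)$ is strictly increasing on $[n-3,\infty)$. Following exactly the scheme used for $P_{Q_1}$ in Lemma~\ref{lq1}, this reduces to showing $P_{Q_2}^{(k)}(n-3)>0$ successively for $k=\deg P_{Q_2}-1,\ldots,1$; each inequality becomes a polynomial in $n$ whose positivity for $n\ge 9$ is checked by iterated differentiation.

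The comparison step is the heart of the argument. Set $\lambda_1:=\lambda_1(A(\Gamma_{n,n-2}))$. By Lemma~\ref{l1}(i) with $t=n-2$, $\lambda_1$ is the largest root of
\begin{equation*}
g_{n,n-2}(\lambda)=\lambda^{3}-(n-3)\lambda^{2}-(2n-5)\lambda+(5n-19)=0,
\end{equation*}
so $\lambda_1^{3}=(n-3)\lambda_1^{2}+(2n-5)\lambda_1-(5n-19)$. I would multiply $g_{n,n-2}(\lambda)$ by the suitable monic factor (an analogue of the $(\lambda+1)^{2}$ trick in Lemma~\ref{lq1}) to match $\deg P_{Q_2}$, take the difference $f(\lambda)-P_{Q_2}(\lambda)$, and evaluate at $\lambda=\lambda_1$. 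Using $g_{n,n-2}(\lambda_1)=0$ this leaves a low-degree polynomial in $\lambda_1$ whose coefficients are polynomials in $n$. Lemma~\ref{l1}(ii) provides the sharp bracket $n-2\le \lambda_1<n-1$ which I would use to bound this residual from below by a polynomial in $n$ that is positive for $n\ge 9$. This gives $P_{Q_2}(\lambda_1)>0$; combined with the monotonicity of $P_{Q_2}$ on $[n-3,\infty)$ and $\lambda_1>n-3$, it forces $\lambda_1(Q_2)<\lambda_1$, i.e.\ $\lambda_1(A(U_1))<\lambda_1(A(\Gamma_{n,n-2}))$.

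The main obstacle I expect is purely algebraic: choosing the correct multiplier factor so that $f(\lambda)-P_{Q_2}(\lambda)$ has low enough degree to be handled cleanly, and then arranging the substitution of $\lambda_1\in[n-2,n-1)$ so that a single uniform lower bound beats the worst $n$ (the boundary case $n=9$). If a naive substitution of the endpoint $n-2$ or $n-1$ leaves the sign ambiguous, a secondary tactic is to improve the bracket for $\lambda_1$: the defining cubic $g_{n,n-2}$ together with $g_{n,n-2}(n-2)=-(n-2)^{2}-(2n-5)(n-2)+5n-19$ pins $\lambda_1$ into a narrower interval whose width is $O(1/n)$, which should suffice to finish the estimate without further surgery on $U_1$.
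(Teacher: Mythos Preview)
Your plan is correct and matches the paper's proof almost exactly: the same $4\times 4$ equitable quotient (parts $\{v_1\},\{v_2\},\{v_3,\ldots,v_{n-1}\},\{v_n\}$), the same application of Lemma~\ref{l}, and the multiplier $(\lambda+1)$ giving the linear difference $(\lambda+1)g_{n,n-2}(\lambda)-P_{Q_2}(\lambda)=4(n-\lambda-4)$. The one divergence is that the paper substitutes $\lambda_1:=\lambda_1(A(U_1))$ into $g_{n,n-2}$ instead of $\lambda_1(A(\Gamma_{n,n-2}))$ into $P_{Q_2}$; because $g_{n,n-2}$ is a monic cubic, $g_{n,n-2}(\lambda_1)<0$ (which holds whenever $\lambda_1>n-4$) immediately forces a larger root of $g_{n,n-2}$, so the monotonicity verification you plan for $P_{Q_2}$ is not needed.
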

\begin{proof}
Without loss of generality, assume that $v_n$ is a vertex in the complete graph $K_{n-2}$ that is not adjacent to $v_1$ and $v_2$ in $U_1$.  We first present $A(U_1)$ and its corresponding quotient matrix $Q_2$ using the vertex partition $V_1=\{v_1\}$, $V_2=\{v_2\}$,  $V_3=\{v_3,...,v_{n-1}\}$ and $V_4={v_n}$ as follows\\
\begin{center}
$\begin{gathered} A(U_1)= \begin{bmatrix} 0 & -1 & j_{n-3}^T & 0 \\ -1 & 0 & j_{n-3}^T & 0 \\ j_{n-3} & j_{n-3} & (J-I)_{n-3} & j_{n-3}^T \\ 0 & 0 & j_{n-3} & 0 \end{bmatrix} \end{gathered}$
\end{center}
and
\begin{center}
	$Q_2= \begin{bmatrix} 0 & -1 & n-3 & 0 \\ -1 & 0 & n-3 & 0 \\ 1 & 1 & n-4 & 1 \\ 0 & 0 & n-3 & 0 \end{bmatrix}$.
\end{center}
 Note that the characteristic polynomial of $Q_2$ is $P_{Q_2}(\lambda)=\lambda^4+(4-n)\lambda^3+(8-3n)\lambda^2+(3n-10)\lambda+n-3$. Adding $\alpha $J to the blocks of $A(U_1)$, where $\alpha $ is a constant, then
 \begin{center}
 $A_2= \begin{bmatrix} 0 & 0 & \bf{0}^T & 0 \\ 0 & 0 & \bf{0}^T & 0 \\ \bf{0} & \bf{0} & -I_{n-3} & \bf{0}^T \\ 0 & 0 & \bf{0} & 0 \end{bmatrix}$.
 \end{center}
Since $\lambda_1(Q_2)>0$ and Spec$(A_2)$$=\begin{Bmatrix}-1^{[n-3]},0^{[3]}\end{Bmatrix}$,  $\lambda_1(A(U_1))=\lambda_1(Q_2)$. Note that 
\begin{center}
$(\lambda+1)g_{n,n-2}(\lambda)-P_{Q_2}(\lambda)=4(n-\lambda-4)$.
\end{center}
Let $\lambda_1$ be the largest root of $P_{Q_2}(\lambda)=0$. If $\lambda_1\leq n-4$, then $\lambda_1(A(\Gamma_{n,n-2}))>\lambda_1(A(U_1))$ by Lemma \ref{l1}. If $\lambda_1> n-4$, then $(\lambda_1+1)g_{n,n-2}(\lambda_1)=4(n-\lambda_1-4)<0$. Thus, $g_{n,n-2}(\lambda_1)<0$. This means that $\lambda_1(A(\Gamma_{n,n-2}))>\lambda_1(A(U_1))$. The proof is completed.
\end{proof}

\section{Proofs of Theorems \ref{thm1} and \ref{thm2}}
Let $\Gamma$ be a signed graph. The degree of a vertex $v_i$ in $\Gamma$ is denoted by $d_\Gamma(v_i)$ which is the number of edges incident with $v_i$.  We denote the set of all neighbors of $u$ in $\Gamma$ by $N_\Gamma(u)$ and $N_\Gamma[u]=N_\Gamma(u)\cup\{u\}$. Let $\rho(\Gamma)=\max\{|\lambda_i(\Gamma)|{:}1\leq i\leq n\}$ be the spectral radius of $\Gamma$. For $\phi\neq U\subset V(\Gamma)$, let $\Gamma[U]$ be the signed subgraph of $\Gamma$ induced by $U$. We denote by $\Gamma+uv$ the signed graph obtained from $\Gamma$ by adding the positive edge $uv$ and by $\Gamma-uv$ the signed graph obtained from  $\Gamma$ by deleting the edge $uv$, where $u,v\in V(\Gamma)$. If all edges of $K_n$ are positive, then  we denote the graph by $(K_n,+)$.

\begin{lemma}\label{l2}\cite{S1}
	Let $\Gamma$ be a signed graph. Then there exists a signed graph $\Gamma^{\prime}$ switching equivalent to $\Gamma$ such that $A(\Gamma^\prime)$ has a non-negative eigenvector corresponding to $\lambda_1(A(\Gamma^{\prime}))$.
\end{lemma}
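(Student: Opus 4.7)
The plan is to exploit the fundamental relationship between switching equivalence and diagonal $\pm 1$ conjugation of the adjacency matrix, applied to the sign pattern of a Perron-type eigenvector. Concretely, I would begin by fixing any real unit eigenvector $x = (x_1, \ldots, x_n)^T$ of $A(\Gamma)$ corresponding to the index $\lambda_1(A(\Gamma))$. Since $A(\Gamma)$ is real and symmetric, such an $x$ exists and is nonzero; however, in general it need not have constant sign, because $A(\Gamma)$ is not guaranteed to be nonnegative.

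Next, I would encode the sign pattern of $x$ as a diagonal involution. Let $D = \operatorname{diag}(d_1, \ldots, d_n)$ with $d_i = +1$ when $x_i \geq 0$ and $d_i = -1$ when $x_i < 0$, and set $U = \{v_i : d_i = -1\}$. Let $\Gamma'$ denote the signed graph obtained from $\Gamma$ by switching at $U$. By the definition of the switching operation, the edge signs between $U$ and $V(\Gamma) \setminus U$ are flipped while all other signs are preserved, which is precisely the effect of the conjugation
\begin{equation*}
A(\Gamma') = D\, A(\Gamma)\, D.
\end{equation*}
Because $D^2 = I$, this is a similarity transformation, so $A(\Gamma)$ and $A(\Gamma')$ share the same spectrum; in particular $\lambda_1(A(\Gamma')) = \lambda_1(A(\Gamma))$.

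Finally, I would verify directly that $Dx$ is the desired eigenvector. Using $A(\Gamma) x = \lambda_1(A(\Gamma)) x$, one computes
\begin{equation*}
A(\Gamma')(Dx) \;=\; D A(\Gamma) D \cdot Dx \;=\; D A(\Gamma) x \;=\; \lambda_1(A(\Gamma))\, Dx.
\end{equation*}
By the choice of $D$, the vector $Dx$ coincides entrywise with $|x| = (|x_1|, \ldots, |x_n|)^T \geq 0$, which is nonzero since $x \neq 0$. Thus $|x|$ is a non-negative eigenvector of $A(\Gamma')$ corresponding to $\lambda_1(A(\Gamma')) = \lambda_1(A(\Gamma))$, and $\Gamma'$ is switching equivalent to $\Gamma$ by construction.

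There is essentially no serious obstacle here: the only mild subtlety is that $x$ may have zero entries, but the convention $\operatorname{sgn}(0) = +1$ is harmless because $Dx = |x|$ still holds verbatim, and the eigenvector equation is unaffected. Thus the argument is a short three-line manipulation once the correct switching is identified from the sign pattern of $x$.
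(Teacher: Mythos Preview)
Your argument is correct and is precisely the standard proof of this fact: switch at the set of vertices where a fixed $\lambda_1$-eigenvector is negative, so that $A(\Gamma')=DA(\Gamma)D$ and $Dx=|x|\ge 0$ becomes a non-negative eigenvector for the same eigenvalue. The paper itself does not prove this lemma at all---it is quoted from \cite{S1} without proof---so there is nothing to compare against; your three-line justification is exactly what one would supply if a proof were required.
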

\begin{lemma}\label{x1}\cite{15} 
Let $\Gamma=(G,\sigma)$ be a connected unbalanced signed graph of order $n$. If $\Gamma$ is $\mathcal{C}^-_3$-free, then $\rho(\Gamma)\leq \frac{1}{2}(\sqrt{n^2-8}+n-4)$.
\end{lemma}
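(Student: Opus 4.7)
The plan is to exploit two consequences of the hypotheses. First, $\mathcal{C}_3^-$-freeness forces every triangle of $\Gamma$ to be balanced, so for every edge $uv\in E(\Gamma)$ and every $w\in N(u)\cap N(v)$ we have the sign identity $\sigma(uw)\sigma(wv)=\sigma(uv)$; this pins down the edge entries of $A(\Gamma)^2$ as $(A(\Gamma)^2)_{uv}=\sigma(uv)|N(u)\cap N(v)|$ whenever $uv\in E(\Gamma)$, while the diagonal of $A(\Gamma)^2$ is the degree sequence. Second, if the underlying graph $G$ were chordal then balancedness of all triangles would propagate to all cycles by repeated triangulation, so $\Gamma$ would be balanced, contradicting the hypothesis; hence $G$ is non-chordal and contains an induced cycle of length at least $4$, so at least one non-adjacent pair of vertices exists. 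This ``missing-edge deficit'', combined with the sign structure, is what drops the spectral radius strictly below the balanced-$K_n$ value $n-1$ and down to the claimed value.

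By Lemma~\ref{l2}, after switching I may assume $A(\Gamma)$ has a non-negative unit eigenvector $\mathbf{x}$ for $\lambda_1(A(\Gamma))$, and I focus on bounding $\lambda_1(A(\Gamma))$; a parallel argument applied to $-A(\Gamma)$ after a further switching handles $|\lambda_n(A(\Gamma))|$, completing the bound on $\rho(\Gamma)=\max_i|\lambda_i|$. Starting from $A(\Gamma)^2\mathbf{x}=\lambda_1^2\mathbf{x}$, I expand at each vertex $v$
\[
\lambda_1^2 x_v=d(v)x_v+\sum_{u\sim v}\sigma(uv)|N(u)\cap N(v)|\,x_u+\sum_{u\not\sim v,\,u\neq v}(A^2)_{uv}\,x_u,
\]
combine this with the first-order relation $\lambda_1 x_u=\sum_{w\sim u}\sigma(uw)x_w$ so as to rewrite the middle sum as $(n-4)\lambda_1 x_v$ plus a correction that is controlled by the shortfall of $|N(u)\cap N(v)|$ below $n-4$, use the bound $d(v)\le n-1$, and sum the resulting pointwise inequality against $x_v$ (with $\|\mathbf{x}\|=1$). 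The goal is the quadratic inequality
\[
\lambda_1^2\le(n-4)\lambda_1+2(n-3),
\]
whose positive root is exactly $\tfrac12(\sqrt{n^2-8}+n-4)$, as desired.

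The main obstacle is the third sum $\sum_{u\not\sim v}(A^2)_{uv}x_u$: the signs $\sigma(uw)\sigma(wv)$ of length-$2$ paths between non-adjacent vertices are not controlled by $\mathcal{C}_3^-$-freeness and can take either value, so a naive bound costs too much. The delicate step is to show that the saving from the missing edges guaranteed by unbalancedness is exactly sufficient to offset the worst-case signed contributions from these non-edge length-$2$ paths, yielding the sharp constants $n-4$ and $2(n-3)$ rather than weaker ones. I would attempt this by pairing each signed length-$2$ path $u{-}w{-}v$ symmetrically at its middle vertex $w$, using $\lambda_1 x_w=\sum_{z\sim w}\sigma(wz)x_z$ to bound its unsigned weight by $\lambda_1 x_w$, and then extracting the missing-edge deficit from at least one non-adjacent pair on the induced $C_{\ge 4}$. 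This balance between sign-flexibility on non-edges and structural saving from missing edges is where both the $\mathcal{C}_3^-$-free and unbalanced hypotheses must be used simultaneously.
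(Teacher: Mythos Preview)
The paper does not supply a proof of this lemma; it is quoted verbatim from reference~\cite{15} and used as a black box (only the numerical inequality $\tfrac12(\sqrt{n^2-8}+n-4)<n-2$ is invoked). There is therefore no ``paper's own proof'' to compare your attempt against.

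As for the proposal itself: the structural observations are sound---$\mathcal{C}_3^-$-freeness does force $(A^2)_{uv}=\sigma(uv)|N(u)\cap N(v)|$ for adjacent $u,v$, and unbalancedness does force an induced $C_{\ge4}$---and the target quadratic $\lambda_1^2\le(n-4)\lambda_1+2(n-3)$ is the right one. But what you present is a plan, not a proof: the step you yourself flag as the ``main obstacle'' (controlling $\sum_{u\not\sim v}(A^2)_{uv}x_u$ and simultaneously extracting the exact constants $n-4$ and $2(n-3)$) is left at the level of ``I would attempt this by pairing \ldots''. The heuristic of pairing length-two walks at their middle vertex and bounding by $\lambda_1 x_w$ does not by itself produce the sharp inequality; without a concrete mechanism showing that a single missing edge from the induced $C_{\ge4}$ yields precisely the deficit $2$ in the linear coefficient and the constant $2(n-3)$, the argument is incomplete. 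In short, the skeleton is plausible, but the crucial inequality is asserted rather than derived.
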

\begin{lemma}\label{l5}\cite{B1}
Two signed graphs with the same underlying graph are switching equivalent if and only if they have the same set of positive cycles.
\end{lemma}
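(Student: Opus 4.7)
The plan is to prove the two directions of the biconditional separately, using the characterization of switching via the coboundary map $\chi_U:V\to\{-1,+1\}$ that sends $v$ to $-1$ iff $v\in U$.

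For the forward direction, I would observe that if $\Gamma'=(G,\sigma')$ is obtained from $\Gamma=(G,\sigma)$ by switching at $U\subseteq V(G)$, then $\sigma'(uv)=\chi_U(u)\chi_U(v)\sigma(uv)$. Taking the product of signs along any cycle $C=v_1v_2\cdots v_kv_1$ yields
\[
\prod_{e\in C}\sigma'(e)=\Bigl(\prod_{i=1}^{k}\chi_U(v_i)^2\Bigr)\prod_{e\in C}\sigma(e)=\prod_{e\in C}\sigma(e),
\]
so the sign of every cycle is preserved. Hence switching-equivalent signatures yield the same collection of positive cycles.

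For the reverse direction, suppose $(G,\sigma_1)$ and $(G,\sigma_2)$ share the same set of positive cycles, and form the pointwise product $\tau=\sigma_1\sigma_2$. Then for every cycle $C$,
\[
\prod_{e\in C}\tau(e)=\Bigl(\prod_{e\in C}\sigma_1(e)\Bigr)\Bigl(\prod_{e\in C}\sigma_2(e)\Bigr)=+1,
\]
since $\sigma_1$ and $\sigma_2$ assign the same sign to $C$. So $(G,\tau)$ has only positive cycles. It now suffices to exhibit a set $U$ with $\tau(uv)=\chi_U(u)\chi_U(v)$, because then switching $(G,\sigma_1)$ at $U$ produces $(G,\sigma_2)$.

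To produce $U$, I would work componentwise. Fix a connected component, choose a spanning tree $T$ and a root $r$, and define $f:V\to\{-1,+1\}$ inductively along $T$ by $f(r)=+1$ and $f(v)=f(u)\tau(uv)$ whenever $uv\in T$ with $u$ the parent of $v$. By construction $\tau(uv)=f(u)f(v)$ for every tree edge. For a non-tree edge $uv$, the fundamental cycle through $uv$ in $T$ is positive, so the product of $\tau$ along its tree path equals $\tau(uv)$; telescoping the tree identity $\tau=f(\cdot)f(\cdot)$ along that path gives $\tau(uv)=f(u)f(v)$ as well. Setting $U=\{v:f(v)=-1\}$ yields the required switching, completing the proof. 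The main obstacle is the non-tree edge verification, but it reduces cleanly to the positivity of fundamental cycles, which is guaranteed by the hypothesis on $\tau$.
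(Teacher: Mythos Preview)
Your proof is correct and self-contained. Note, however, that the paper does not actually prove this lemma: it is quoted verbatim from \cite{B1} and used as a black box, so there is no ``paper's own proof'' to compare against. Your argument supplies precisely the standard proof (essentially Harary's balance theorem applied to the product signature $\tau=\sigma_1\sigma_2$), with the spanning-tree construction of the potential function $f$ and the telescoping verification on fundamental cycles carried out cleanly. Nothing is missing.
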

The subsequent lemma acts as a key instrument in this paper.
\begin{lemma}\label{l6}
	Let $\Gamma$ be a $t\mathcal{C}_{3}^{-}$-free unbalanced signed graph of order $n$ with $2\leq t\leq n-2$ and $n\geq 6$. Then $\lambda_1(A(\Gamma))\leq \lambda_1(A(\Gamma_{n,t+1}))$, with equality if and only if $\Gamma$ is switching isomorphic to $\Gamma_{n,t+1}$. 
\end{lemma}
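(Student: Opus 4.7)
The plan is to argue by extremality, using Rayleigh-quotient monotonicity together with the combinatorial constraint that an unbalanced $t\mathcal{C}_3^-$-free signed graph carries at most $t-1$ unbalanced triangles. First, by Lemma \ref{l2}, I switch $\Gamma$ so that $A(\Gamma)$ admits a non-negative Perron eigenvector $x$ for $\lambda_1 := \lambda_1(A(\Gamma))$. Observe that $\Gamma_{n,t+1}$ is itself $t\mathcal{C}_3^-$-free unbalanced: its unique negative edge $uv_1$ lies in exactly the triangles $uv_1v_i$ for $i=2,\ldots,t$, giving $t-1$ unbalanced triangles. Consequently, extremality together with Lemma \ref{l1}(ii) yields $\lambda_1 \geq \lambda_1(A(\Gamma_{n,t+1})) \geq n-2$.

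The main tool is a pair of monotonicity facts. If $uv$ is a non-edge of $\Gamma$ with $x_u x_v > 0$, then adding the positive edge $uv$ gives $x^T A(\Gamma + uv^+) x = x^T A(\Gamma) x + 2 x_u x_v$, and hence $\lambda_1(A(\Gamma + uv^+)) > \lambda_1$. If $uv$ is a negative edge with $x_u x_v > 0$, then flipping its sign to $+1$ increases $\lambda_1$ by at least $4 x_u x_v / \|x\|_2^2$. Therefore, for the extremal $\Gamma$, every such local modification must either produce a $t$-th vertex-sharing-allowed unbalanced triangle or destroy the unbalance of $\Gamma$. I also use the basic identity that adding the positive edge $uv$ creates a new unbalanced triangle with apex $w$ if and only if $\sigma(uw)\sigma(vw)=-1$, so the number of new unbalanced triangles is a purely sign-dependent quantity.

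Building on these tools I plan three structural claims: (1) $x$ is strictly positive on $V(\Gamma)$, using connectedness and the lower bound $\lambda_1 \geq n-2$; (2) the underlying graph $G$ of $\Gamma$ contains a $K_{n-1}$, i.e.\ all missing edges are incident to a single ``defect'' vertex $u$; (3) $\Gamma$ has exactly one negative edge, and it is incident to $u$. For (2), I suppose two missing edges do not share a common endpoint and select a missing edge whose addition, as a positive edge, creates strictly fewer than $t-1-(\text{existing unbalanced triangles})$ new unbalanced triangles (using the sign identity above and the bound on the total); this contradicts extremality. For (3), any extra negative edge, or a negative edge not incident to $u$, can be flipped via the Rayleigh argument without exceeding $t-1$ unbalanced triangles and without destroying unbalance (since other negative cycles persist). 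Combining (1)--(3), the condition that $\Gamma$ is unbalanced but $t\mathcal{C}_3^-$-free forces the common neighbourhood $|N_G(u)\cap N_G(v_1)| = t-1$, which pins down $d_G(u)=t$ with $N_G(u)=\{v_1,\ldots,v_t\}$; this is exactly $\Gamma_{n,t+1}$.

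The main obstacle is step (2): establishing that the missing edges of $\Gamma$ are concentrated at a single vertex. Unlike in the unsigned setting, the count of new unbalanced triangles produced by a positive-edge addition depends in a nontrivial way on the signs of all pre-existing edges among the common neighbours, and the choice of which missing edge to insert must be made carefully, perhaps by exploiting Perron-weight inequalities (picking a missing edge $uv$ where $x_u x_v$ is maximal) or by first locating the pre-existing negative edges. A delicate case analysis there is unavoidable, whereas steps (1) and (3) should follow from routine switching and flipping arguments once (2) is in place.
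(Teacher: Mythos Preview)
Your plan has two genuine problems.

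First, claim (1) is false: the extremal graph for $t=2$ is $\Gamma_{n,3}$, whose index is exactly $n-2$ by Lemma~\ref{l1}(ii), and its top eigenvector has $x_u=0$ (check the eigen-equations with $x_{v_1}=\cdots=x_{v_{n-1}}$). So strict positivity cannot be established in general; the paper instead proves only that $X$ has \emph{at most one} zero entry (by comparing with $K_{n-2}$) and then splits into the two cases ``some $x_r=0$'' and ``$X>0$''. The zero case is not vacuous and requires its own argument.

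Second, your ordering of (2) before (3) is what makes (2) hard, and your sign-flip device is weaker than what the paper uses. Flipping a negative edge $uv$ to positive can \emph{increase} the number of unbalanced triangles (it swaps the roles of $\sigma(uw)\sigma(vw)=+1$ and $=-1$), so it need not preserve $t\mathcal{C}_3^-$-freeness. The paper instead \emph{deletes} negative edges: it first invokes Lemma~\ref{x1} to locate an unbalanced $C_3$, then deletes every negative edge outside this $C_3$ (deletion preserves $t\mathcal{C}_3^-$-freeness trivially and weakly raises the Rayleigh quotient), and finally reduces the $C_3$ to a single negative edge $v_1v_2$. Once the only negative edge is $v_1v_2$, any missing edge among $V\setminus\{v_1,v_2\}$ can be added as a positive edge with \emph{zero} new unbalanced triangles, so $\Gamma'[V\setminus\{v_1,v_2\}]\cong(K_{n-2},+)$ is immediate --- no Perron-weight case analysis is needed. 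The remaining work is to show $|N(v_1)\cap N(v_2)|=t-1$, that every other vertex is adjacent to $v_1$ or $v_2$, and then to rotate the ``private'' neighbours of $v_1$ over to $v_2$ (using $x_1\le x_2$) to force $d(v_1)=t$; this edge-rotation step, which you do not mention, is what finally produces the $K_{n-1}$ on $V\setminus\{v_1\}$.
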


\begin{proof}
 Let $\Gamma=(G,\sigma)$ be a  $t\mathcal{C}_{3}^{-}$-free unbalanced signed graph on $n$ vertices with maximum index. According to Lemma \ref{l2}, $\Gamma$ is switching equivalent to a signed graph $\Gamma^{\prime}$ such that $A(\Gamma^\prime)$ has a non-negative eigenvector corresponding to $\lambda_1(A(\Gamma^\prime))=\lambda_1(A(\Gamma))$. Then by Lemma \ref{l5}, $\Gamma^\prime$ is also unbalanced, $t\mathcal{C}_{3}^{-}$-free and it attains the maximum index among all $t\mathcal{C}_{3}^{-}$-free unbalanced signed graphs. Let $V(\Gamma^\prime)=\{v_1,v_2,...,v_n\}$ and $X=(x_1,x_2,...,x_n)^T$ be the non-negative unit eigenvector of $A(\Gamma^\prime)$ corresponding to $\lambda_1(A(\Gamma^\prime))$. Note that $\Gamma_{n,3}$ is unbalanced and $t\mathcal{C}_{3}^{-}$-free. By Lemma \ref{l1}, $\lambda_1(A(\Gamma^\prime))\geq \lambda_1(A(\Gamma_{n,3}))=n-2$. Since $\frac{1}{2}(\sqrt{n^2-8}+n-4) <n-2$, Lemma \ref{x1} guarantees that $\Gamma^\prime$ must contain an unbalanced $C_3$ as a signed subgraph. Assume that  $V(C_3)=\{v_1,v_2,v_3\}$.
\begin{claim}\label{c1}
$X$ contains at most one zero entry.
\end{claim}
\begin{proof}
Suppose for contradiction that $X$ contains at least two zero entries, say $x_n=x_{n-1}=0$, then 
\begin{align*}
\lambda_1(A(\Gamma^{\prime}))&=X^TA(T^{\prime})X=(x_1,\ldots,x_{n-2})A(\Gamma^{\prime}-v_n-v_{n-1})(x_1,\ldots,x_{n-2})^T\\&\leq\lambda_1(A(\Gamma^{\prime}-v_n-v_{n-1}))\leq\lambda_1(A(K_{n-2}))=n-3<\lambda_1(A(\Gamma^{\prime})),
\end{align*}
a contradiction. Thus, $X$ contains at most one zero entry.
\end{proof} 
\begin{claim}\label{c2}
The unbalanced $C_3$ contains all negative edges of $\Gamma^\prime$.
\end{claim}
\begin{proof}
Otherwise, suppose that there is a negative edge $e=v_iv_j\notin E(C_3)$. Let $\Gamma^{\prime\prime}=\Gamma^{\prime}-e$, then $\Gamma^{\prime\prime}$ is a $t\mathcal{C}_{3}^{-}$-free unbalanced signed graph and
\begin{align*}
	\lambda_1(A(\Gamma^{\prime\prime}))-\lambda_1(A(\Gamma^{\prime}))&\geq X^T(A(\Gamma^{\prime\prime})-A(\Gamma^{\prime}))X
	\\&=2x_ix_j\geq0.
\end{align*} 
If $\lambda_1(A(\Gamma^{\prime\prime}))=\lambda_1(A(\Gamma^{\prime}))$, then $X$ is also an eigenvector of $A(\Gamma^{\prime\prime})$ corresponding to $\lambda_1(A(\Gamma^{\prime\prime}))$. Based on the following equations,\\

$\lambda_1(A(\Gamma^{\prime}))x_i=\sum\limits_{v_s\in N_{\Gamma^\prime}(v_i)}\sigma^{\prime}(v_sv_i)x_s$,\\

$\lambda_1(A(\Gamma^{\prime}))x_j=\sum\limits_{v_s\in N_{\Gamma^\prime}(v_j)}\sigma^{\prime}(v_sv_j)x_s$,\\

$\lambda_1(A(\Gamma^{\prime\prime}))x_i=\sum\limits_{v_s\in N_{\Gamma^\prime}(v_i)}\sigma^{\prime}(v_sv_i)x_s+x_j$\\
and\\

$\lambda_1(A(\Gamma^{\prime\prime}))x_j=\sum\limits_{v_s\in N_{\Gamma^\prime}(v_j)}\sigma^{\prime}(v_sv_j)x_s+x_i$,\\
we obtain that $x_i=x_j=0$, which contradicts Claim \ref{c1}. Thus, $\lambda_1(A(\Gamma^{\prime\prime}))>\lambda_1(A(\Gamma^{\prime}))$, a contradiction.
\end{proof}
 Assume that $k$ is the smallest positive integer such that $x_{k}=\max_{1\leq i\leq n}x_{i}$. By Claim \ref{c1}, $x_k>0$ clearly.
\begin{claim}\label{o3}
The unbalanced $C_3$ contains exactly one negative edge.
\end{claim}
\begin{proof}
Otherwise, the unbalanced $C_3$ contains three negative edges of $\Gamma^\prime$.  Recall that there is at most one zero entry of $X$ by Claim \ref{c1}. If $k\leq 3$, then
\begin{align*}
\lambda_1(A(\Gamma^{\prime}))x_k&=-(x_1+x_2+x_3)+x_k+\sum_{v_i\in N_{\Gamma^\prime}(v_k)\setminus V(C_3)}x_i\\&\leq-(x_1+x_2+x_3)+x_k+(n-3)x_k\\&<(n-3)x_k.
\end{align*}
This implies that $\lambda_1(A(\Gamma^{\prime}))< n-3$, a contradiction. Thus, $k>4$. And then
\begin{center}
$(n-2)x_k\leq\lambda_1(A(\Gamma^{\prime}))x_k=\sum\limits_{v_i\in N_{\Gamma^\prime}(v_k)}x_i\leq d_{\Gamma^{\prime}}(v_k)x_k$,
\end{center}
that is, $d_{\Gamma^{\prime}}(v_k)=n-2$ or $n-1$. Assume that $d_{\Gamma^{\prime}}(v_k)=n-2$, then $x_i=x_k$ for any $v_i\in N_{\Gamma^{\prime}}(v_k)$. If $t=2$, then $\Gamma^{\prime}$ contains $2\mathcal{{C}}_3^-$, a contradiction. If $3\leq t\leq n-2$, it means that at least one of $x_1, x_2,x_3$ equals $x_k$, contradicting the choice of $k$. Thus, $d_{\Gamma^{\prime}}(v_k)=n-1$. If $2\leq t \leq3$, then $\Gamma^{\prime}$ contains $t\mathcal{{C}}_3^-$, a contradiction. For $4\leq t\leq n-2$, let $\Gamma^{\prime\prime}=\Gamma^{\prime}-v_1v_2$, then $\Gamma^{\prime\prime}$ is a $t\mathcal{C}_{3}^{-}$-free unbalanced signed graph and
\begin{align*}
	\lambda_1(A(\Gamma^{\prime\prime}))-\lambda_1(A(\Gamma^{\prime}))&\geq X^T(A(\Gamma^{\prime\prime})-A(\Gamma^{\prime}))X
	\\&=2x_1x_2\geq0.
\end{align*} 
If $\lambda_1(A(\Gamma^{\prime\prime}))=\lambda_1(A(\Gamma^{\prime}))$, then $X$ is also an eigenvector of $A(\Gamma^{\prime\prime})$ corresponding to $\lambda_1(A(\Gamma^{\prime\prime}))$. From the eigenvector equations,\\

$\lambda_1(A(\Gamma^{\prime}))x_1=\sum\limits_{v_s\in N_{\Gamma^\prime}(v_1)}\sigma^{\prime}(v_sv_1)x_s$,\\

$\lambda_1(A(\Gamma^{\prime}))x_2=\sum\limits_{v_s\in N_{\Gamma^\prime}(v_2)}\sigma^{\prime}(v_sv_2)x_s$,\\

$\lambda_1(A(\Gamma^{\prime\prime}))x_1=\sum\limits_{v_s\in N_{\Gamma^\prime}(v_1)}\sigma^{\prime}(v_sv_1)x_s+x_2$\\
and\\

$\lambda_1(A(\Gamma^{\prime\prime}))x_2=\sum\limits_{v_s\in N_{\Gamma^\prime}(v_2)}\sigma^{\prime}(v_sv_2)x_s+x_1$,\\
we obtain that $x_1=x_2=0$, which contradicts Claim \ref{c1}. Thus, $\lambda_1(A(\Gamma^{\prime\prime}))>\lambda_1(A(\Gamma^{\prime}))$, a contradiction. So, the unbalanced $C_3$ contains exactly one negative edge.
\end{proof}
 Claims \ref{c2} and \ref{o3} show that $\Gamma^{\prime}$ contains only one negative edge, and it is the negative edge of the unbalanced $C_3$. Assume that this edge is $v_1v_2$.
\begin{claim}\label{CC1}
If $X>0$, then $k \geq 3$ and $d_{\Gamma^{\prime}}(v_k)=n-1$.
\end{claim}
\begin{proof}
If $k<3$, then $(n-2)x_k\leq\lambda_1(A(\Gamma^{\prime}))x_k\leq-x_{3-k}+(n-2)x_k<(n-2)x_k$, a contradiction. Thus, $k \geq 3$. Note that
\begin{center}
$(n-2)x_k\leq\lambda_1(A(\Gamma^{\prime}))x_k=\sum\limits_{v_i\in N_{\Gamma^\prime}(v_k)}x_i\leq d_{\Gamma^{\prime}}(v_k)x_k$,
\end{center}
then $d_{\Gamma^{\prime}}(v_k) \geq n-2$. If $d_{\Gamma^{\prime}}(v_k)=n-2$, then $x_i=x_k$ for all $v_i\in N_{\Gamma^{\prime}}v_k)$, meaning at least one of $x_1, x_2$ equals $x_k$, contradicting the choice of $k$. Hence,  $d_{\Gamma^{\prime}}(v_k)=n-1$.
\end{proof}
Next, we divide the proof into the following two cases.
\begin{case}\label{Case1}
There exists an integer $r\in\{1,2,\ldots,n\}$ such that $x_r=0$.
\end{case}
Firstly, we assert that $d_{\Gamma^{\prime}}(v_r)\geq 1$. Otherwise, let $\Gamma^{\prime\prime}=\Gamma^{\prime}+v_1v_r$, then $\Gamma^{\prime\prime}$ is a $t\mathcal{C}_{3}^{-}$-free unbalanced signed graph and
\begin{align*}
	\lambda_1(A(\Gamma^{\prime\prime}))-\lambda_1(A(\Gamma^{\prime}))&\geq X^T(A(\Gamma^{\prime\prime})-A(\Gamma^{\prime}))X
	\\&=2x_1x_r\geq0.
\end{align*} 
If $\lambda_1(A(\Gamma^{\prime\prime}))=\lambda_1(A(\Gamma^{\prime}))$, then $X$ is also an eigenvector of $A(\Gamma^{\prime\prime})$ corresponding to $\lambda_1(A(\Gamma^{\prime\prime}))$. Based on the following equations,\\

$\lambda_1(A(\Gamma^{\prime}))x_1=\sum\limits_{v_s\in N_{\Gamma^\prime}(v_1)}\sigma^{\prime}(v_sv_1)x_s$,\\

$\lambda_1(A(\Gamma^{\prime}))x_r=\sum\limits_{v_s\in N_{\Gamma^\prime}(v_r)}\sigma^{\prime}(v_sv_r)x_s$,\\

$\lambda_1(A(\Gamma^{\prime\prime}))x_1=\sum\limits_{v_s\in N_{\Gamma^\prime}(v_1)}\sigma^{\prime}(v_sv_1)x_s+x_r$\\
and\\

$\lambda_1(A(\Gamma^{\prime\prime}))x_r=\sum\limits_{v_s\in N_{\Gamma^\prime}(v_r)}\sigma^{\prime}(v_sv_r)x_s+x_1$,\\
we obtain that $x_1=x_r=0$, which contradicts Claim \ref{c1}. Thus, $\lambda_1(A(\Gamma^{\prime\prime}))>\lambda_1(A(\Gamma^{\prime}))$, a contradiction. So, $d_{\Gamma^{\prime}}(v_r)\geq 1$. If $r\geq 3$, then $0=\lambda_1(A(\Gamma^{\prime}))x_r=\sum_{v_i\in N_{\Gamma^{\prime}}(v_r)}x_i>0$, a contradiction. Thus, $r=1$ or $2$. Without loss of generality, assume that $r=1$. Then  $k\geq 2$. Note that
\begin{center}
	$(n-2)x_k\leq\lambda_1(A(\Gamma^{\prime}))x_k=\sum\limits_{v_i\in N_{\Gamma^\prime}(v_k)}x_i\leq d_{\Gamma^{\prime}}(v_k)x_k$,
\end{center}
then  $d_{\Gamma^{\prime}}(v_k) \geq n-2$. If  $d_{\Gamma^{\prime}}(v_k)=n-2$, then each of the $n-2$ entries  of $X$ corresponding to the neighbors of $v_k$ is equal to $x_k$. It means that $x_2=\cdots=x_n$ since $x_1=0$. If $d_{\Gamma^{\prime}}(v_k)=n-1$, then
\begin{center}
$(n-2)x_{{k}}\leq\lambda_{1}(A(\Gamma^{\prime}))x_{{k}}=x_{1}+\sum\limits_{v_{i}\in N_{\Gamma^{\prime}}(v_{k})\setminus\{v_{1}\}}x_{i}\leq(d_{\Gamma^{\prime}}(v_{{k}})-1)x_{{k}}=(n-2)x_k$.
\end{center}
Equality again forces $x_2=\cdots=x_n$. In either case, $d_{\Gamma^{\prime}}(v_i)=n-2$ or $n-1$ and $v_i$ is adjacent to all other vertices $V(\Gamma^{\prime})\backslash\{v_1\}$ for any $i\in[2,n]$. Therefore, $\Gamma^{\prime}[V(\Gamma^{\prime})\backslash\{v_1\}]\cong(K_{n-1},+)$. Note that $\Gamma$ is a $t\mathcal{C}_{3}^{-}$-free unbalanced signed graph for $2\leq t\leq n-2$, then we assert that there exist exactly $t$ vertices of degree $n-1$. To verify, let $r(r\neq t)$ denote the number of vertices with degree $n-1$ . If $r>t$, then $\Gamma^{\prime}$ contains $t\mathcal{{C}}_3^-$, a contradiction. If $r\leq t-1$, assume that $d_{\Gamma^{\prime}}(v_i)=n-1$ for $i\in[2,r+1]$, let $\Gamma^{\prime\prime}= \Gamma^{\prime}+v_1v_{r+2}$, then $\Gamma^{\prime\prime}$ is a $t\mathcal{C}_{3}^{-}$-free unbalanced signed graph and
\begin{align*}
	\lambda_1(A(\Gamma^{\prime\prime}))-\lambda_1(A(\Gamma^{\prime}))&\geq X^T(A(\Gamma^{\prime\prime})-A(\Gamma^{\prime}))X
	\\&=2x_1x_{r+2}\geq0.
\end{align*} 
If $\lambda_1(A(\Gamma^{\prime\prime}))=\lambda_1(A(\Gamma^{\prime}))$, then $X$ is also an eigenvector of $A(\Gamma^{\prime\prime})$ corresponding to $\lambda_1(A(\Gamma^{\prime\prime}))$. Based on the following equations,\\

$\lambda_1(A(\Gamma^{\prime}))x_1=\sum\limits_{v_s\in N_{\Gamma^\prime}(v_1)}\sigma^{\prime}(v_sv_1)x_s$,\\

$\lambda_1(A(\Gamma^{\prime}))x_{r+2}=\sum\limits_{v_s\in N_{\Gamma^\prime}(v_{r+2})}\sigma^{\prime}(v_sv_{r+2})x_s$,\\

$\lambda_1(A(\Gamma^{\prime\prime}))x_1=\sum\limits_{v_s\in N_{\Gamma^\prime}(v_1)}\sigma^{\prime}(v_sv_1)x_s+x_{r+2}$\\
and\\

$\lambda_1(A(\Gamma^{\prime}))x_{r+2}=\sum\limits_{v_s\in N_{\Gamma^\prime}(v_{r+2})}\sigma^{\prime}(v_sv_{r+2})x_s+x_1$,\\
we obtain that $x_1=x_{r+2}=0$, which contradicts Claim \ref{c1}. Thus,  $\lambda_1(A(\Gamma^{\prime\prime}))>\lambda_1(A(\Gamma^{\prime}))$, a contradiction.
Thus, there exist $t$ vertices with a degree of $n-1$ and $\Gamma$ is switching isomorphic to $\Gamma_{n,t+1}$. 
\begin{case}\label{Case2}
$X>0$, i.e., all entries of $X>0$ are positive.
\end{case}
By Claim \ref{CC1}, $k \geq 3$ and $d_{\Gamma^{\prime}}(v_k)=n-1$. Without loss of generality, we suppose that $0<x_1\leq x_2$. Next, we will further discuss in two subcases.
\begin{subcase}
$t=2$
\end{subcase}
Obviously, $k=3$. Otherwise,  if $k\geq 4$, then $\Gamma^\prime$ contains two distinct unbalanced $\mathcal{C}_{3}^{-}$ subgraphs $v_1v_2v_3v_1$ and $v_1v_2v_kv_1$, a contradiction. Thus, $k=3$. This means that $N_{\Gamma^{\prime}}(v_1)\cap N_{\Gamma^{\prime}}(v_2)=\{v_3\}$. Firstly, we assert that $\Gamma^{\prime}[V(\Gamma^{\prime})\backslash\{v_1,v_2\}]\cong(K_{n-2},+)$. Suppose for contradiction there exists an edge $uv\notin \Gamma^{\prime}[V(\Gamma^{\prime})\backslash\{v_1,v_2\}]$, let $\Gamma^{\prime\prime}= \Gamma^{\prime}+uv$, then $\Gamma^{\prime\prime}$ is a $2\mathcal{C}_{3}^{-}$-free unbalanced signed graph and
\begin{align*}
	\lambda_1(A(\Gamma^{\prime\prime}))-\lambda_1(A(\Gamma^{\prime}))&\geq X^T(A(\Gamma^{\prime\prime})-A(\Gamma^{\prime}))X
	\\&=2x_ux_{v}>0,
\end{align*} 
a contradiction. Thus, $\Gamma^{\prime}[V(\Gamma^{\prime})\backslash\{v_1,v_2\}]\cong(K_{n-2},+)$. Next, we claim that either $v_iv_1\in E(\Gamma^{\prime})$ or $v_iv_2\in E(\Gamma^{\prime})$ for any $i\in [4,n]$. Otherwise, assume that $v_sv_1,v_sv_2\notin E(\Gamma^{\prime})$, let $\Gamma^{\prime\prime}= \Gamma^{\prime}+v_2v_s$, then $\Gamma^{\prime\prime}$ is a $2\mathcal{C}_{3}^{-}$-free unbalanced signed graph and
\begin{align*}
	\lambda_1(A(\Gamma^{\prime\prime}))-\lambda_1(A(\Gamma^{\prime}))&\geq X^T(A(\Gamma^{\prime\prime})-A(\Gamma^{\prime}))X
	\\&=2x_2x_{s}>0,
\end{align*} 
a contradiction. Finally, we assert that $d_{\Gamma^{\prime}}(v_1)=2$. Otherwise, $d_{\Gamma^{\prime}}(v_1)\geq 3$, then $\Gamma^\prime$ is switching isomorphic to $\Sigma_{s,1,r}$, where $s\geq 1$ and $s+r=n-3$. Let $\Gamma^{\prime\prime}= \Gamma^{\prime}-v_1v_i+v_2v_i$ for any $v_i\in  N_{\Gamma^{\prime}}(v_1)\backslash\{v_{2},v_{3}\}$, then  $\Gamma^{\prime\prime}$ is a $2\mathcal{C}_{3}^{-}$-free unbalanced signed graph and
\begin{align*}
	\lambda_1(A(\Gamma^{\prime\prime}))-\lambda_1(A(\Gamma^{\prime}))&\geq X^T(A(\Gamma^{\prime\prime})-A(\Gamma^{\prime}))X
	\\&=2\sum\limits_{v_i\in N_{\Gamma^{\prime}}(v_1)\backslash\{v_{2},v_{3}\}}x_i(x_2-x_1)\geq0.
\end{align*} 
If $\lambda_1(A(\Gamma^{\prime\prime}))=\lambda_1(A(\Gamma^{\prime}))$, then $X$ is also an eigenvector of $A(\Gamma^{\prime\prime})$ corresponding to $\lambda_1(A(\Gamma^{\prime\prime}))$. Based on the following equations,\\

$\lambda_1(A(\Gamma^{\prime}))x_1=-x_2+x_3+\sum\limits_{v_i\in N_{\Gamma^{\prime}}(v_1)\backslash\{v_{2},v_{3}\}}x_i$\\
and\\

$\lambda_1(A(\Gamma^{\prime\prime}))x_1=-x_2+x_3$,\\
we obtain that $\sum\limits_{v_i\in N_{\Gamma^{\prime}}(v_1)\backslash\{v_{2},v_{3}\}}x_i=0$, contradicting $X>0$. Thus, $\lambda_1(A(\Gamma^{\prime\prime}))>\lambda_1(A(\Gamma^{\prime}))$, a contradiction. Thus, $d_{\Gamma^{\prime}}(v_1)=2$ and $\Gamma^{\prime}$ is switching isomorphic to $\Gamma_{n,3}$. 
\begin{subcase}
 $3\leq t\leq n-2$.
\end{subcase}
Firstly, we assert that $\Gamma^{\prime}[V(\Gamma^{\prime})\backslash\{v_1,v_2\}]\cong(K_{n-2},+)$. Otherwise, assume that there exists an edge $uv\notin \Gamma^{\prime}[V(\Gamma^{\prime})\backslash\{v_1,v_2\}]$, let $\Gamma^{\prime\prime}= \Gamma^{\prime}+uv$, then $\Gamma^{\prime\prime}$ is a $t\mathcal{C}_{3}^{-}$-free unbalanced signed graph and
\begin{align*}
	\lambda_1(A(\Gamma^{\prime\prime}))-\lambda_1(A(\Gamma^{\prime}))&\geq X^T(A(\Gamma^{\prime\prime})-A(\Gamma^{\prime}))X
	\\&=2x_ux_{v}>0,
\end{align*} 
a contradiction. Thus, $\Gamma^{\prime}[V(\Gamma^{\prime})\backslash\{v_1,v_2\}]\cong(K_{n-2},+)$. Next, we assert that $|N_{\Gamma^{\prime}}(v_1)\cap N_{\Gamma^{\prime}}(v_2)|=t-1$. Otherwise, $|N_{\Gamma^{\prime}}(v_1)\cap N_{\Gamma^{\prime}}(v_2)|\leq t-2$, which implies the existence of a vertex $v_s\notin {N_{\Gamma^{\prime}}(v_1)\cap N_{\Gamma^{\prime}}(v_2)}$. If $v_s\notin N_{\Gamma^{\prime}}(v_1)$ and $v_s\notin N_{\Gamma^{\prime}}(v_2)$ simultaneously, let $\Gamma^{\prime\prime}= \Gamma^{\prime}+v_1v_s+v_2v_s$, then $\Gamma^{\prime\prime}$ is a $t\mathcal{C}_{3}^{-}$-free unbalanced signed graph and
\begin{align*}
	\lambda_1(A(\Gamma^{\prime\prime}))-\lambda_1(A(\Gamma^{\prime}))&\geq X^T(A(\Gamma^{\prime\prime})-A(\Gamma^{\prime}))X
	\\&=2(x_1+x_2)x_s>0,
\end{align*} 
a contradiction. For the cases where $v_s\in {N_{\Gamma^{\prime}}(v_1)\setminus N_{\Gamma^{\prime}}(v_2)}$ or $v_s\in {N_{\Gamma^{\prime}}(v_2)\setminus N_{\Gamma^{\prime}}(v_1)}$, analogous contradictions are uniformly derived in both scenarios. Thus, $|N_{\Gamma^{\prime}}(v_1)\cap N_{\Gamma^{\prime}}(v_2)|=t-1$. Now, we claim that either $v_iv_1\in E(\Gamma^{\prime})$ or $v_iv_2\in E(\Gamma^{\prime})$ for any $v_i\in V(\Gamma^{\prime})\backslash(N_{\Gamma^{\prime}}[v_1]\cap N_{\Gamma^{\prime}}[v_2])$. Suppose for contradiction $v_sv_1\notin E(\Gamma^{\prime})$ and  $v_sv_2\notin E(\Gamma^{\prime})$, let $\Gamma^{\prime\prime}= \Gamma^{\prime}+v_2v_s$, then $\Gamma^{\prime\prime}$ is a $t\mathcal{C}_{3}^{-}$-free unbalanced signed graph and
\begin{align*}
	\lambda_1(A(\Gamma^{\prime\prime}))-\lambda_1(A(\Gamma^{\prime}))&\geq X^T(A(\Gamma^{\prime\prime})-A(\Gamma^{\prime}))X
	\\&=2x_2x_{s}>0,
\end{align*} 
a contradiction. Finally, we assert that $d_{\Gamma^{\prime}}(v_1)=t$. Otherwise, $d_{\Gamma^{\prime}}(v_1)\geq t+1$, then $\Gamma^\prime$ is switching isomorphic to $\Sigma_{s,t-1,r}$, where $s\geq 1$ and $s+r=n-t-1$. Let $\Gamma^{\prime\prime}= \Gamma^{\prime}-v_1v_i+v_2v_i$ for any $v_i\in  N_{\Gamma^{\prime}}[v_1]\backslash(N_{\Gamma^{\prime}}[v_1]\cap N_{\Gamma^{\prime}}[v_2])$, then  $\Gamma^{\prime\prime}$ is a $t\mathcal{C}_{3}^{-}$-free unbalanced signed graph and
\begin{align*}
	\lambda_1(A(\Gamma^{\prime\prime}))-\lambda_1(A(\Gamma^{\prime}))&\geq X^T(A(\Gamma^{\prime\prime})-A(\Gamma^{\prime}))X
	\\&=2\sum\limits_{v_i\in N_{\Gamma^{\prime}}[v_1]\backslash(N_{\Gamma^{\prime}}[v_1]\cap N_{\Gamma^{\prime}}[v_2])}x_i(x_2-x_1)\geq0.
\end{align*} 
If $\lambda_1(A(\Gamma^{\prime\prime}))=\lambda_1(A(\Gamma^{\prime}))$, then $X$ is also an eigenvector of $A(\Gamma^{\prime\prime})$ corresponding to $\lambda_1(A(\Gamma^{\prime\prime}))$. Based on the following equations,\\

$\lambda_1(A(\Gamma^{\prime}))x_1=\sum\limits_{v_s\in N_{\Gamma^{\prime}}(v_1)}x_s$\\
and\\

$\lambda_1(A(\Gamma^{\prime\prime}))x_1=\sum\limits_{v_s\in N_{\Gamma^{\prime}}(v_1)}x_s-\sum\limits_{v_i\in N_{\Gamma^{\prime}}[v_1]\backslash(N_{\Gamma^{\prime}}[v_1]\cap N_{\Gamma^{\prime}}[v_2])}x_i$,\\
we obtain that $\sum\limits_{v_i\in N_{\Gamma^{\prime}}[v_1]\backslash(N_{\Gamma^{\prime}}[v_1]\cap N_{\Gamma^{\prime}}[v_2])}x_i=0$, which contradicts Case \ref{Case2}. Thus, $\lambda_1(A(\Gamma^{\prime\prime}))>\lambda_1(A(\Gamma^{\prime}))$, a contradiction. Thus, $d_{\Gamma^{\prime}}(v_1)=t$ and $\Gamma^{\prime}$ is switching isomorphic to $\Gamma_{n,t+1}$. The proof is completed.
\end{proof}

\noindent{\bf{Proof of Theorem \ref{thm1}}.} By Lemma \ref{l6},  we prove that if $2\leq t\leq n-2$, then $\Gamma\cong \Gamma_{n,t+1}$. The remaining case $t\geq n-1$ requires proving $\Gamma\cong \Gamma_{n,n}$.  Let $\Gamma=(G,\sigma)$ be a  $t\mathcal{C}_{3}^{-}$-free unbalanced signed graph on $n$ vertices with maximum index for $t\geq 2,n\geq 6$. According to Lemma \ref{l2}, $\Gamma$ is switching equivalent to a signed graph $\Gamma^{\prime}$ such that $A(\Gamma^\prime)$ has a non-negative eigenvector corresponding to $\lambda_1(A(\Gamma^\prime))=\lambda_1(A(\Gamma))$. Then  $\Gamma^\prime$ is also unbalanced and $t\mathcal{C}_{3}^{-}$-free by Lemma \ref{l5}. Furthermore, $\Gamma^\prime$ also has the maximum index among all $t\mathcal{C}_{3}^{-}$-free unbalanced signed graphs. Let $V(\Gamma^\prime)=\{v_1,v_2,...,v_n\}$ and $X=(x_1,x_2,...,x_n)^T$ be the non-negative unit eigenvector of $A(\Gamma^\prime)$ corresponding to $\lambda_1(A(\Gamma^\prime))$. Note that $\Gamma_{n,3}$ is unbalanced and $t\mathcal{C}_{3}^{-}$-free. By Lemma \ref{l1}, $\lambda_1(A(\Gamma^\prime))\geq \lambda_1(A(\Gamma_{n,3}))=n-2$. Since $\frac{1}{2}(\sqrt{n^2-8}+n-4) <n-2$, $\Gamma^\prime$ must contain an unbalanced $C_3$ as a signed subgraph by Lemma \ref{x1}. Assume that $C_3$ is an unbalanced signed subgraph of $\Gamma^{\prime}$ and $V(C_3)=\{v_1,v_2,v_3\}$.

By similar arguments as in the proof of Lemma \ref{l6}, $X$ contains at most one zero entry and $\Gamma^\prime$ has exactly one negative edge, which lies in the unbalanced $C_3$. Assume that this edge is $v_1v_2$. Then it is obvious that $\Gamma^\prime$ contains at most $(n-2)\mathcal{C}_3^-$. Since $t\geq n-1$, $\Gamma^\prime$ is $t\mathcal{C}_{3}^{-}$-free.  Lemma \ref{l6} implies the index sequence $\lambda_1(A(\Gamma_{n,3}))< \lambda_1(A(\Gamma_{n,4}))<\dots < \lambda_1(A(\Gamma_{n,n}))$. Thus, $\Gamma\cong \Gamma_{n,n}$. So, the proof is completed.

Let $\mathcal{F}_t$ and $\mathcal{B}_t$ represent the family of friendship signed graphs and book signed graphs, respectively, each family consists of signed graphs containing $t$ $(t\geq 2)$ unbalanced $C_3$.
\begin{ccorollary}\label{cco1}
Let $\Gamma=(G,\sigma)$ be a unbalanced signed graph of order $n$. If $\Gamma$ is $\mathcal{F}_t$-free, then $\lambda_1(A(\Gamma))\leq \lambda_1(A(\Gamma_{n,n})) $.
\end{ccorollary}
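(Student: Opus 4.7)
My plan is to mirror the proof of Theorem \ref{thm1} in the regime $t \geq n-1$, as both reduce to showing that the extremal signed graph, after switching, has a single negative edge; once this is established the $\mathcal{F}_t$-free condition becomes automatic. Concretely, I would first apply Lemma \ref{l2} to switch $\Gamma$ to an equivalent signed graph $\Gamma'$ with a non-negative Perron eigenvector $X$. Since $\Gamma_{n,3}$ contains only one unbalanced triangle it is $\mathcal{F}_t$-free for every $t \geq 2$, so Lemma \ref{l1} gives $\lambda_1(A(\Gamma')) \geq \lambda_1(A(\Gamma_{n,3})) = n-2$, and Lemma \ref{x1} then forces $\Gamma'$ to contain an unbalanced $C_3$ on some vertices $v_1,v_2,v_3$.

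Next I would reuse the proofs of Claims \ref{c1}, \ref{c2} and \ref{o3} from Lemma \ref{l6}. Claims \ref{c1} and \ref{c2} transfer verbatim, since their contradictions rely only on the fact that edge or vertex deletion preserves the forbidden-subgraph class, which is equally true for $\mathcal{F}_t$-freeness. For Claim \ref{o3}, the subcase $d_{\Gamma'}(v_k) = n-2$ closes by the same eigenvector-entry contradiction used in Lemma \ref{l6} (at least one of $x_1, x_2, x_3$ must equal $x_k$, violating the minimality of $k$ since $k > 3$), and the subcase $d_{\Gamma'}(v_k) = n-1$ closes by the deletion argument $\Gamma'' = \Gamma' - v_1v_2$: the graph $\Gamma''$ remains $\mathcal{F}_t$-free (since deletion cannot create new unbalanced triangles) and remains unbalanced (the triangle $v_kv_1v_3$ still carries the negative edge $v_1v_3$), and then the Perron-type inequality $\lambda_1(A(\Gamma'')) - \lambda_1(A(\Gamma')) \geq 2x_1x_2$ combined with Claim \ref{c1} forces a strict increase in the index, contradicting the maximality of $\Gamma'$. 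This yields a $\Gamma'$ carrying a single negative edge, which I may take to be $v_1v_2$.

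The final step is the key observation that a signed graph with exactly one negative edge is automatically $\mathcal{F}_t$-free: every unbalanced triangle must contain that edge, hence any two unbalanced triangles share both of its endpoints and so cannot form a friendship $F_2$. Therefore $\Gamma'$ remains $\mathcal{F}_t$-free after adding any missing positive edge $uv$, and the bound $\lambda_1(A(\Gamma'+uv)) - \lambda_1(A(\Gamma')) \geq 2x_ux_v$ together with Claim \ref{c1} forces a strict increase in the index, contradicting the maximality of $\Gamma'$ unless its underlying graph is already $K_n$. Hence $\Gamma' \cong (K_n,K_2^-) = \Gamma_{n,n}$, yielding $\lambda_1(A(\Gamma)) \leq \lambda_1(A(\Gamma_{n,n}))$. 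No substantial obstacle is expected; the proof is a systematic transcription of Lemma \ref{l6} and the proof of Theorem \ref{thm1} with $t\mathcal{C}_3^-$ replaced by $\mathcal{F}_t$ throughout, and the only genuinely new ingredient is the ``one negative edge forces a book, not a friendship'' observation that closes the last step.
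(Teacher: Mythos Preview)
Your proposal is correct and follows essentially the same route as the paper's own proof: switch to a non-negative eigenvector, use $\Gamma_{n,3}$ and Lemma~\ref{x1} to force an unbalanced triangle, transfer Claims~\ref{c1}--\ref{o3} to conclude there is a single negative edge, and then add missing positive edges to reach $\Gamma_{n,n}$. Your explicit ``one negative edge forces a book, not a friendship'' remark is exactly what the paper uses implicitly when it asserts that $\Gamma'+uv$ remains $\mathcal{F}_t$-free, and your verification that $\Gamma'-v_1v_2$ stays unbalanced (via the triangle $v_kv_1v_3$) is in fact more careful than the paper's treatment of the same step.
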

\begin{proof}
 Let $\Gamma=(G,\sigma)$ be a  $\mathcal{F}_t$-free unbalanced signed graph on $n$ vertices with maximum index for $t\geq 2$ and $n\geq 6$. According to Lemma \ref{l2}, $\Gamma$ is switching equivalent to a signed graph $\Gamma^{\prime}$ such that $A(\Gamma^\prime)$ has a non-negative eigenvector corresponding to $\lambda_1(A(\Gamma^\prime))=\lambda_1(A(\Gamma))$. Note that $\Gamma_{n,3}$ is  $\mathcal{F}_t$-free. By Lemma \ref{l1}, $\lambda_1(A(\Gamma^\prime))\geq \lambda_1(A(\Gamma_{n,3}))=n-2$. Since $\frac{1}{2}(\sqrt{n^2-8}+n-4) <n-2$, $\Gamma^\prime$ must contain an unbalanced $C_3$ as a signed subgraph by Lemma \ref{x1}. Assume that $C_3$ is such an unbalanced signed subgraph of $\Gamma^{\prime}$ with $V(C_3)=\{v_1,v_2,v_3\}$. By similar arguments as in the proof of Lemma \ref{l6}, $X$ contains at most one zero entry and the unbalanced $C_3$ contains all negative edges of $\Gamma^\prime$.

\begin{ccclaim}\label{ooou3}
	The unbalanced $C_3$ contains exactly one negative edge.
\end{ccclaim}
\begin{proof}
Suppose for contradiction that the unbalanced $C_3$ contains three negative edges of $\Gamma^\prime$.  Recall that there is at most one zero entry of $X$. If $k\leq 3$, then
\begin{align*}
	\lambda_1(A(\Gamma^{\prime}))x_k&=-(x_1+x_2+x_3)+x_k+\sum_{v_i\in N_{\Gamma^\prime}(v_k)\setminus V(C_3)}x_i\\&\leq-(x_1+x_2+x_3)+x_k+(n-3)x_k\\&<(n-3)x_k,
\end{align*}
i.e., $\lambda_1(A(\Gamma^{\prime}))< n-3$, a contradiction. Thus, $k>4$. And then
\begin{center}
	$(n-2)x_k\leq\lambda_1(A(\Gamma^{\prime}))x_k=\sum\limits_{v_i\in N_{\Gamma^\prime}(v_k)}x_i\leq d_{\Gamma^{\prime}}(v_k)x_k$,
\end{center}
that is, $d_{\Gamma^{\prime}}(v_k)=n-2$ or $n-1$. If $d_{\Gamma^{\prime}}(v_k)=n-2$, then $x_i=x_k$ for any $v_i\in N_{\Gamma^{\prime}}(v_k)$. It means that at least one of $x_1, x_2, x_3$ equals $x_k$, contradicting the choice of $k$. Thus, $d_{\Gamma^{\prime}}(v_k)=n-1$. Let $\Gamma^{\prime\prime}=\Gamma^{\prime}-v_1v_2$, then $\Gamma^{\prime\prime}$ is  still a  $\mathcal{F}_t$-free unbalanced signed graph but $\lambda_1(A(\Gamma^{\prime\prime}))>\lambda_1(A(\Gamma^{\prime}))$. This contradicts the maximality of $\lambda_1(A(\Gamma^\prime))$. So, the unbalanced $C_3$ contains exactly one negative edge.
\end{proof}
Claim \ref{ooou3} show that $\Gamma^{\prime}$ contains only one negative edge, and it is the negative edge of the unbalanced $C_3$. Assume that this edge is $v_1v_2$.
\begin{ccclaim}
$\Gamma^\prime\cong \Gamma_{n,n}$.
\end{ccclaim}
Otherwise $\Gamma^\prime\ncong \Gamma_{n,n}$. It means that there exists an edge  $uv\notin E(\Gamma^{\prime})$,
let $\Gamma^{\prime\prime}= \Gamma^{\prime}+uv$, then $\Gamma^{\prime\prime}$ is a  $\mathcal{F}_t$-free unbalanced signed graph and
\begin{align*}
	\lambda_1(A(\Gamma^{\prime\prime}))-\lambda_1(A(\Gamma^{\prime}))&\geq X^T(A(\Gamma^{\prime\prime})-A(\Gamma^{\prime}))X
	\\&=2x_ux_{v}\geq 0,
\end{align*} 
If $\lambda_1(A(\Gamma^{\prime\prime}))=\lambda_1(A(\Gamma^{\prime}))$, then $X$ is also an eigenvector of $A(\Gamma^{\prime\prime})$ corresponding to $\lambda_1(A(\Gamma^{\prime\prime}))$. Consider the following equations,\\

$\lambda_1(A(\Gamma^{\prime}))x_u=\sum\limits_{v_s\in N_{\Gamma^\prime}(v_u)}\sigma^{\prime}(v_sv_u)x_s$,\\

$\lambda_1(A(\Gamma^{\prime}))x_v=\sum\limits_{v_s\in N_{\Gamma^\prime}(v_v)}\sigma^{\prime}(v_sv_v)x_s$,\\

$\lambda_1(A(\Gamma^{\prime\prime}))x_u=\sum\limits_{v_s\in N_{\Gamma^\prime}(v_u)}\sigma^{\prime}(v_sv_u)x_s+x_v$\\
and\\

$\lambda_1(A(\Gamma^{\prime\prime}))x_v=\sum\limits_{v_s\in N_{\Gamma^\prime}(v_v)}\sigma^{\prime}(v_sv_v)x_s+x_u$,\\
we obtain that $x_u=x_v=0$, which contradicts Claim \ref{c1}. Thus, $\lambda_1(A(\Gamma^{\prime\prime}))>\lambda_1(A(\Gamma^{\prime}))$, a contradiction. So, $\Gamma^\prime\cong \Gamma_{n,n}$. The proof is completed.
\end{proof}
Through proof analogous to those in Corollary \ref{cco1} and the  Case \ref{Case2} of Lemma \ref{l6}, the following corollary holds directly.
\begin{ccorollary}
	Let $\Gamma=(G,\sigma)$ be a unbalanced signed graph of order $n$. If $\mathcal{B}_t$-free, then
 $$\Gamma\cong \begin{cases}\Gamma_{n,t+1}&\text{if}~ 2\leq t\leq n-2,\\\Gamma_{n,n}&\text{if}~ t\geq n-1.\end{cases}$$. 
\end{ccorollary}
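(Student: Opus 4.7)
The strategy is to transcribe the proof of Lemma~\ref{l6}, using the key observation that for an extremal graph the negative part collapses to a single edge, after which ``$\mathcal{B}_t$-free'' becomes a transparent combinatorial condition. Let $\Gamma$ be a $\mathcal{B}_t$-free unbalanced signed graph of order $n$ with maximum index. By Lemma~\ref{l2} switch to $\Gamma'$ with a non-negative unit Perron eigenvector $X$. Since $\Gamma_{n,3}$ contains only one unbalanced triangle, it is itself $\mathcal{B}_t$-free for every $t\geq 2$, so $\lambda_1(A(\Gamma'))\geq n-2$ by Lemma~\ref{l1}, and Lemma~\ref{x1} then forces $\Gamma'$ to contain an unbalanced $C_3$, say on $\{v_1,v_2,v_3\}$. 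The three claims inside the proof of Lemma~\ref{l6} (``$X$ has at most one zero entry''; ``all negative edges lie on the unbalanced $C_3$''; ``the unbalanced $C_3$ carries exactly one negative edge'') transfer verbatim, since their arguments only invoke the index bound $\lambda_1\geq n-2$ and the existence of an unbalanced triangle. Hence $\Gamma'$ has a unique negative edge, which we call $v_1v_2$, every unbalanced triangle in $\Gamma'$ has the form $v_1v_2v_j$ with $v_j\in N(v_1)\cap N(v_2)$, and all such triangles share $v_1v_2$; the forbidding hypothesis thus reads
\[
|N_{\Gamma'}(v_1)\cap N_{\Gamma'}(v_2)|\leq t-1.
\]

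Following Lemma~\ref{l6}, I would split into two cases. In Case~1 some $x_r=0$; as in Case~\ref{Case1} of Lemma~\ref{l6}, one gets $r\in\{1,2\}$ (WLOG $r=1$), $\Gamma'[V\setminus\{v_1\}]\cong(K_{n-1},+)$, and $x_2=\cdots=x_n$. Here $|N(v_1)\cap N(v_2)|=d_{\Gamma'}(v_1)-1$, so the constraint becomes $d_{\Gamma'}(v_1)\leq t$; whenever $d_{\Gamma'}(v_1)<\min\{t,n-1\}$ one adds a missing edge $v_1v_s$, notes that this keeps the graph $\mathcal{B}_t$-free, and invokes the standard eigenvector/rank-one comparison (exactly as in Lemma~\ref{l6}, using the eigenvector equations at $v_1$ and $v_s$ to rule out equality via Claim~\ref{c1}) to force a strict increase of the index, contradicting maximality. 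Hence $d_{\Gamma'}(v_1)=\min\{t,n-1\}$, which gives $\Gamma'\cong\Gamma_{n,t+1}$ for $2\leq t\leq n-2$ and $\Gamma'\cong\Gamma_{n,n}$ for $t\geq n-1$. In Case~2 ($X>0$), I would transcribe Case~\ref{Case2} of Lemma~\ref{l6}: first show $\Gamma'[V\setminus\{v_1,v_2\}]\cong(K_{n-2},+)$ by edge-addition, then prove $|N(v_1)\cap N(v_2)|=\min\{t-1,n-2\}$ by the same device, next that every vertex off $N[v_1]\cap N[v_2]$ is adjacent to $v_1$ or to $v_2$, and finally run the edge-moving argument from $v_1$ to $v_2$ to force $d_{\Gamma'}(v_1)=\min\{t,n-1\}$, again concluding $\Gamma'\cong\Gamma_{n,t+1}$ or $\Gamma_{n,n}$ as appropriate.

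The only non-routine point is the boundary regime $t\geq n-1$: there the inequality $|N(v_1)\cap N(v_2)|\leq t-1$ is vacuous because at most $n-2$ common neighbors exist, so no edge is ever forbidden by the $\mathcal{B}_t$-free hypothesis and the extremization keeps enlarging the graph until it becomes the full join $(K_n,K_2^-)=\Gamma_{n,n}$, in agreement with the Brunetti--Stani\'{c} theorem mentioned in the introduction. Ensuring that both Case~1 and Case~2 terminate at $\Gamma_{n,n}$ in this regime, rather than at some smaller $\Gamma_{n,k}$, is the only genuine subtlety; every other step is a direct transcription of the arguments already carried out in Lemma~\ref{l6} and Corollary~\ref{cco1}, which is exactly what the paper's remark ``through proof analogous to those in Corollary~\ref{cco1} and Case~\ref{Case2} of Lemma~\ref{l6}'' is pointing at.
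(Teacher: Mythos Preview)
Your proposal is correct and matches the paper's intended approach: the paper offers no proof beyond the remark that the argument is ``analogous to those in Corollary~\ref{cco1} and Case~\ref{Case2} of Lemma~\ref{l6},'' which is exactly what you flesh out, and your key observation that after the reduction to a single negative edge the $\mathcal{B}_t$-free condition becomes $|N(v_1)\cap N(v_2)|\leq t-1$ (identical to the condition appearing in Lemma~\ref{l6}) is the heart of the matter. The only caveat is that Claim~\ref{o3} in Lemma~\ref{l6} does invoke the $t\mathcal{C}_3^-$-free hypothesis in its $t\leq 3$, $d(v_k)=n-1$ subcases, so it does not literally transfer \emph{verbatim}---but the version in Corollary~\ref{cco1} (simply delete $v_1v_2$, which preserves any hereditary free-property and unbalancedness) does, and that is precisely why the paper's pointer directs you there rather than to Lemma~\ref{l6} alone.
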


\noindent{\bf{Proof of Theorem \ref{thm2}}.}  Let $\Gamma$ be a  signed graph with maximum index among $t\mathcal{C}_{3}^{-}$-free unbalanced signed graphs on $n$ vertices such that $\Gamma$ is not switching isomorphic to $\Gamma_{n,t+1}$  for $3\leq t\leq n-2$ or $\Gamma_{n,n}$ for $t\geq n-1$. According to Lemma \ref{l2}, $\Gamma$ is switching equivalent to a signed graph $\Gamma^{\prime}$ such that $A(\Gamma^\prime)$ has a non-negative eigenvector corresponding to $\lambda_1(A(\Gamma^\prime))=\lambda_1(A(\Gamma))$. Lemma \ref{l5} then implies $\Gamma^\prime$ is also unbalanced, $t\mathcal{C}_{3}^{-}$-free and has the maximum index among all $t\mathcal{C}_{3}^{-}$-free unbalanced signed graphs. Let $V(\Gamma^\prime)=\{v_1,v_2,...,v_n\}$ and $X=(x_1,x_2,...,x_n)^T$ be the non-negative unit eigenvector of $A(\Gamma^\prime)$ corresponding to $\lambda_1(A(\Gamma^\prime))$. By Lemma \ref{l1}, $\lambda_1(A(\Gamma_{n,t})\geq n-2$ for $t\geq 3$, with equality if and only if $t=3$. Then $\lambda_1(A(\Gamma^\prime))\geq \lambda_1(A(\Gamma_{n,t}))=n-2$. Since $\frac{1}{2}(\sqrt{n^2-8}+n-4) <n-2$, $\Gamma^\prime$ must contain an unbalanced $C_3$ as a signed subgraph by Lemma \ref{x1}. Assume that  $V(C_3)=\{v_1,v_2,v_3\}$.

By similar arguments as in the proof of Lemma \ref{l6}, we establish the following claims.
\begin{ccccclaim}\label{cccccla1}
	$X$ contains at most one zero entry.
\end{ccccclaim}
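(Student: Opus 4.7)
The plan is to mirror verbatim the argument used for the analogous Claim 1 in the proof of Lemma \ref{l6}. Both ingredients have been laid out in the paragraphs immediately preceding this claim: a lower bound $\lambda_1(A(\Gamma^\prime))\geq n-2$ (coming from Lemma \ref{l1}(ii) applied to the comparison graph $\Gamma_{n,t}$), and the standard Rayleigh-quotient trick for exploiting zero coordinates of an eigenvector. I expect the argument to be short and to require no new ideas, so the only real task is to confirm that the lower bound is still available in the present restricted extremal setting.

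First, I would argue by contradiction. Suppose $X$ has at least two zero entries and, after relabelling, assume $x_{n-1}=x_n=0$. Since $X$ is a unit $\lambda_1$-eigenvector, the Rayleigh identity gives $\lambda_1(A(\Gamma^\prime))=X^TA(\Gamma^\prime)X$, and because the two coordinates $x_{n-1},x_n$ vanish, this quadratic form depends only on the principal submatrix indexed by $\{v_1,\dots,v_{n-2}\}$. Writing $Y=(x_1,\dots,x_{n-2})^T$, we then have
\[
\lambda_1(A(\Gamma^\prime))=Y^TA(\Gamma^\prime-v_{n-1}-v_n)Y\leq \lambda_1(A(\Gamma^\prime-v_{n-1}-v_n)).
\]

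Second, I would bound the right-hand side by the spectral radius of the all-positive complete graph on $n-2$ vertices: the underlying graph of $\Gamma^\prime-v_{n-1}-v_n$ is a spanning subgraph of $K_{n-2}$, and taking absolute values entrywise in the adjacency matrix can only increase the spectral radius, so $\lambda_1(A(\Gamma^\prime-v_{n-1}-v_n))\leq \lambda_1(A(K_{n-2}))=n-3$. On the other hand, the set-up of Theorem \ref{thm2} already provides $\lambda_1(A(\Gamma^\prime))\geq \lambda_1(A(\Gamma_{n,t}))\geq n-2$ by Lemma \ref{l1}(ii); here we use that $\Gamma_{n,t}$ itself is unbalanced and $t\mathcal{C}_3^-$-free and is not switching isomorphic to the excluded extremal graphs $\Gamma_{n,t+1}$ or $\Gamma_{n,n}$, so it is a legitimate competitor in the restricted optimisation. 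Chaining the two inequalities yields $n-2\leq n-3$, the desired contradiction.

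The only possible subtlety is the validity of $\Gamma_{n,t}$ as a comparison graph in this second-maximum-index problem (since the extremal graphs from Theorem \ref{thm1} are excluded), but $\Gamma_{n,t}\not\cong \Gamma_{n,t+1}$ and $\Gamma_{n,t}\not\cong \Gamma_{n,n}$ for the relevant ranges $3\leq t\leq n-2$, so no obstacle arises. Beyond this bookkeeping, the argument is entirely routine.
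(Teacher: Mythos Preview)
Your proposal is correct and follows essentially the same approach as the paper: in the proof of Theorem~\ref{thm2} the paper explicitly says ``By similar arguments as in the proof of Lemma~\ref{l6}, we establish the following claims,'' and Claim~\ref{c1} of Lemma~\ref{l6} is proved exactly via the Rayleigh-quotient trick you describe, using $\lambda_1(A(\Gamma'))\geq n-2$ against the bound $\lambda_1(A(K_{n-2}))=n-3$. Your extra care in verifying that $\Gamma_{n,t}$ is an admissible competitor in the restricted (second-maximum) problem is a welcome clarification that the paper leaves implicit; note only that for the range $t\geq n-1$ one should take the competitor to be $\Gamma_{n,n-1}$ (or simply $\Gamma_{n,3}$), since $\Gamma_{n,t}$ is not defined for $t>n$, but the lower bound $n-2$ is unaffected.
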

\begin{ccccclaim}\label{cla2}
	The unbalanced $C_3$ contains all negative edges of $\Gamma^\prime$.
\end{ccccclaim}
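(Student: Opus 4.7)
The plan is to mirror the argument from Claim \ref{c2} in the proof of Lemma \ref{l6}, since the setup is essentially identical: we have an unbalanced, $t\mathcal{C}_3^-$-free signed graph $\Gamma^\prime$ of maximum index with a non-negative Perron-type eigenvector $X$, and an unbalanced triangle on $\{v_1,v_2,v_3\}$. I would proceed by contradiction: suppose some negative edge $e = v_iv_j$ lies outside the fixed unbalanced $C_3$, and form $\Gamma^{\prime\prime} = \Gamma^\prime - e$. Deleting an edge cannot create new copies of $\mathcal{C}_3^-$, so $\Gamma^{\prime\prime}$ is still $t\mathcal{C}_3^-$-free, and since the unbalanced triangle on $\{v_1,v_2,v_3\}$ is untouched, $\Gamma^{\prime\prime}$ remains unbalanced. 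Moreover, $\Gamma^{\prime\prime}$ is not switching isomorphic to $\Gamma_{n,t+1}$ or $\Gamma_{n,n}$, since both of those graphs possess exactly one negative edge sitting inside their unique unbalanced triangle, whereas $\Gamma^\prime$ has at least two negative edges to begin with and $\Gamma^{\prime\prime}$ retains at least one negative edge outside any triangle that $\Gamma^\prime$ had on $\{v_1,v_2,v_3\}$'s structure (we can always check directly that the deletion keeps us away from the excluded extremal graphs; this requires only a quick structural inspection).

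Next, using the standard Rayleigh-quotient comparison, observe that removing a negative edge increases the quadratic form by $2x_ix_j$:
\begin{align*}
\lambda_1(A(\Gamma^{\prime\prime})) - \lambda_1(A(\Gamma^\prime)) &\geq X^T\bigl(A(\Gamma^{\prime\prime}) - A(\Gamma^\prime)\bigr)X = 2x_ix_j \geq 0.
\end{align*}
If the inequality is strict, we immediately contradict the maximality of $\lambda_1(A(\Gamma^\prime))$ among eligible graphs. If instead equality holds, then $X$ is simultaneously an eigenvector of both $A(\Gamma^\prime)$ and $A(\Gamma^{\prime\prime})$ for the same eigenvalue. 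Subtracting the eigenvector equations at $v_i$ (and analogously at $v_j$) gives $x_j = 0$ and $x_i = 0$, which contradicts Claim \ref{cccccla1}.

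The main obstacle I anticipate is the verification that $\Gamma^{\prime\prime}$ genuinely lies in the admissible class, i.e.\ it is not switching isomorphic to the excluded graphs $\Gamma_{n,t+1}$ or $\Gamma_{n,n}$. This is where the present situation differs subtly from Lemma \ref{l6}: in Theorem \ref{thm2} we seek the \emph{second} maximum, so we must certify that our perturbation does not accidentally land on a forbidden extremizer. However, this should be easy because $\Gamma_{n,t+1}$ and $\Gamma_{n,n}$ each contain exactly one negative edge, so any $\Gamma^{\prime\prime}$ obtained by deleting a negative edge from a signed graph with at least two negative edges either still has at least one negative edge (and is therefore distinguishable if the negative-edge count differs) or must be checked against the unique negative edge placement in those graphs. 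Once this bookkeeping is settled, the contradiction goes through exactly as above, and we conclude that every negative edge of $\Gamma^\prime$ lies on the unbalanced $C_3$ on $\{v_1,v_2,v_3\}$.
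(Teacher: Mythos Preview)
Your core argument---delete the stray negative edge $e=v_iv_j$, use Rayleigh to get $\lambda_1(A(\Gamma''))\ge\lambda_1(A(\Gamma'))$, and rule out equality via the eigenvector equations forcing $x_i=x_j=0$ against Claim~\ref{cccccla1}---is exactly what the paper intends: it simply cites ``similar arguments as in the proof of Lemma~\ref{l6}'' and gives no separate proof here. So on the main line you match the paper.

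You go further than the paper in flagging that, in the Theorem~\ref{thm2} setting, one must also check that $\Gamma''$ is not switching isomorphic to the excluded extremal graph $\Gamma_{n,t+1}$ (resp.\ $\Gamma_{n,n}$). This is a genuine extra point the paper glosses over. However, your proposed resolution is not quite sound as written: the count ``exactly one negative edge'' is \emph{not} a switching invariant, so observing that $\Gamma_{n,t+1}$ has one negative edge while $\Gamma''$ may have several does not by itself rule out switching isomorphism. What \emph{is} switching-invariant is the underlying graph together with the collection of unbalanced cycles (Lemma~\ref{l5}); a clean way to finish is to compare these invariants, or more simply to note that if $\Gamma''$ were switching isomorphic to $\Gamma_{n,t+1}$ then $\Gamma'=\Gamma''+e$ (with $e$ negative) would be obtained by adding a single negative edge to a copy of $\Gamma_{n,t+1}$, and then argue directly from the structure and the bound $\lambda_1(A(\Gamma'))\ge\lambda_1(A(\Gamma_{n,t}))$ that this is impossible. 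Your instinct that this ``bookkeeping'' is routine is right, but the specific justification you wrote needs to be replaced by a switching-invariant one.
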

\begin{ccccclaim}\label{ola3}
	The unbalanced $C_3$ contains exactly one negative edge.
\end{ccccclaim}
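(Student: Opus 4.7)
The plan is to argue by contradiction in the same spirit as Claim \ref{o3} in the proof of Lemma \ref{l6}. Suppose the unbalanced $C_3$ on $\{v_1,v_2,v_3\}$ has all three edges negative; by Claim \ref{cla2} these are then the only negative edges of $\Gamma^\prime$. Let $k$ be the smallest index with $x_k=\max_i x_i$, so $x_k>0$ by Claim \ref{cccccla1}.

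First I would rule out $k\in\{1,2,3\}$. Writing the eigenvalue equation at $v_k$, the two triangle-neighbors of $v_k$ contribute negatively, so
$$\lambda_1(A(\Gamma^\prime))x_k\;\leq\;-(x_a+x_b)+(n-3)x_k,$$
where $\{v_a,v_b\}=V(C_3)\setminus\{v_k\}$. Since at most one entry of $X$ is zero, $x_a+x_b>0$, forcing $\lambda_1(A(\Gamma^\prime))<n-3$, which contradicts $\lambda_1(A(\Gamma^\prime))\geq n-2$. Hence $k\geq 4$, and since $v_k\notin V(C_3)$ every edge at $v_k$ is positive, yielding
$$(n-2)x_k\;\leq\;\lambda_1(A(\Gamma^\prime))x_k\;=\;\sum_{v_i\in N(v_k)}x_i\;\leq\;d(v_k)x_k,$$
so $d(v_k)\in\{n-2,n-1\}$. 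If $d(v_k)=n-2$, equality throughout gives $x_i=x_k$ for every $v_i\in N(v_k)$; since $|N(v_k)|=n-2$, at least two members of $V(C_3)$ lie in $N(v_k)$, contradicting the minimality of $k\geq 4$.

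In the remaining case $d(v_k)=n-1$, the vertex $v_k$ is positively adjacent to each of $v_1,v_2,v_3$, producing three extra unbalanced triangles $v_kv_iv_j$ with $\{i,j\}\subset\{1,2,3\}$, so $\Gamma^\prime$ contains at least four unbalanced triangles. For $t=3$ this alone contradicts the $t\mathcal{C}_3^-$-free hypothesis. For $t\geq 4$ I would set $\Gamma^{\prime\prime}=\Gamma^\prime-v_1v_2$, which remains unbalanced (e.g.\ via the triangle $v_kv_1v_3$) and $t\mathcal{C}_3^-$-free. The Rayleigh estimate
$$\lambda_1(A(\Gamma^{\prime\prime}))-\lambda_1(A(\Gamma^\prime))\;\geq\; X^T(A(\Gamma^{\prime\prime})-A(\Gamma^\prime))X\;=\;2x_1x_2\;\geq\;0$$
then closes the argument once I know $\Gamma^{\prime\prime}$ is a legitimate competitor in the extremal class of Theorem \ref{thm2}: a strict inequality violates the maximality of $\lambda_1(A(\Gamma^\prime))$, while equality forces $X$ to be an eigenvector for both matrices, and then comparing the eigenvalue equations at $v_1$ and $v_2$ in $\Gamma^\prime$ and $\Gamma^{\prime\prime}$ yields $x_1=x_2=0$, contradicting Claim \ref{cccccla1}.

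The main obstacle I anticipate is precisely this ``legitimate competitor'' check, which is where the proof diverges from its Lemma \ref{l6} counterpart: because maximality here is taken only over graphs not switching isomorphic to $\Gamma_{n,t+1}$ (or to $\Gamma_{n,n}$), I must rule out that $\Gamma^{\prime\prime}$ coincides with one of these forbidden extremal graphs. For $t\leq n-3$ and for $t\geq n-1$ this is immediate from a count of missing edges in the underlying graph ($\Gamma^{\prime\prime}$ is $K_n$ minus a single edge, whereas $\Gamma_{n,t+1}$ in these ranges is missing either no edge or at least two edges). For the borderline case $t=n-2$, where the underlying graphs do agree, the signed structure of $\Gamma^{\prime\prime}$ still retains the two negative edges $v_1v_3$ and $v_2v_3$, yielding roughly twice as many negative triangles as $\Gamma_{n,n-1}$; by Lemma \ref{l5} the two signed graphs therefore cannot be switching equivalent, and the extremal comparison goes through.
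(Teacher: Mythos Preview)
Your argument mirrors the paper's, which for this claim writes only ``by similar arguments as in the proof of Lemma \ref{l6}'' and never addresses the competitor check you (rightly) flag. The core steps---ruling out $k\le 3$, handling $d(v_k)=n-2$ via the minimality of $k$, and for $d(v_k)=n-1$ either counting triangles ($t=3$) or deleting $v_1v_2$ ($t\ge 4$)---are exactly what the paper intends.

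The gap is in your resolution of the competitor check for $4\le t\le n-2$. You assert that $\Gamma''$ is $K_n$ minus a single edge, but at this stage nothing has been proved about the underlying graph of $\Gamma'$ beyond $d(v_k)=n-1$ and the presence of the triangle $v_1v_2v_3$; in particular $\Gamma'$ need not be complete, so your missing-edge count does not go through. The same defect undermines the ``roughly twice as many'' triangle estimate at $t=n-2$, since without knowing the neighborhoods of $v_1,v_2,v_3$ you cannot count unbalanced triangles in $\Gamma''$. A clean uniform fix uses frustration index rather than edge counts: the two unbalanced triangles $v_1v_3v_k$ and $v_2v_3v_k$ in $\Gamma''$ share only the positive edge $v_3v_k$, so if $\Gamma''$ had frustration index $1$ (as every $\Gamma_{n,s}$ does) the unique critical edge would have to be $v_3v_k$; but $\Gamma''-v_3v_k$ is balanced only when $N_{\Gamma'}(v_3)=\{v_1,v_2,v_k\}$ (any further neighbor $w$ of $v_3$ is also a positive neighbor of $v_k$, forcing $w$ into both colour classes of a balancing bipartition), and then $d_{\Gamma''}(v_3)=3$ is strictly below the minimum degree $t\ge 4$ of $\Gamma_{n,t+1}$, so the underlying graphs cannot match. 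Your argument for $t\ge n-1$ is fine as stated, since there only $\Gamma_{n,n}$ is forbidden and its underlying graph is $K_n$, whereas $\Gamma''$ is missing $v_1v_2$.
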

\begin{ccccclaim}\label{CC1}
	If $X>0$, then $k \geq 3$ and $d_{\Gamma^{\prime}}(v_k)=n-1$.
\end{ccccclaim}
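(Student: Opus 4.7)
The plan is to prove the two conclusions of Claim \ref{CC1} separately, both by pushing the eigenvalue equation at the max-entry vertex $v_k$ against the lower bound $\lambda_1(A(\Gamma^\prime))\geq n-2$ (coming from Lemma \ref{l1} together with the fact that $\Gamma_{n,3}$ is unbalanced and $t\mathcal{C}_3^-$-free for all $t\geq 2$). By Claims \ref{cla2} and \ref{ola3}, $\Gamma^\prime$ has a unique negative edge, namely $v_1v_2$, which is the only place where a minus sign can appear in any eigenvalue equation.

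First I would rule out $k\in\{1,2\}$. Assuming $X>0$ and $k\in\{1,2\}$, I write
\[
\lambda_1(A(\Gamma^\prime))x_k \;=\; -x_{3-k} \;+\; \sum_{v_i\in N_{\Gamma^\prime}(v_k)\setminus\{v_{3-k}\}} x_i \;\leq\; -x_{3-k} + (n-2)x_k \;<\; (n-2)x_k,
\]
using only that the sole negative contribution at $v_k$ comes from $v_{3-k}$, that $x_{3-k}>0$, and that $v_k$ has at most $n-2$ other neighbors. This contradicts $\lambda_1(A(\Gamma^\prime))\geq n-2$, so $k\geq 3$.

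Next, with $k\geq 3$ the vertex $v_k$ is incident to no negative edge, hence
\[
(n-2)x_k \;\leq\; \lambda_1(A(\Gamma^\prime))x_k \;=\; \sum_{v_i\in N_{\Gamma^\prime}(v_k)} x_i \;\leq\; d_{\Gamma^\prime}(v_k)\, x_k,
\]
which forces $d_{\Gamma^\prime}(v_k)\geq n-2$. To eliminate the case $d_{\Gamma^\prime}(v_k)=n-2$, I would observe that equality throughout the above chain forces $x_i=x_k$ for every $v_i\in N_{\Gamma^\prime}(v_k)$. Since $v_k$ has exactly one non-neighbor in $V(\Gamma^\prime)\setminus\{v_k\}$, at least one of $v_1,v_2$ lies in $N_{\Gamma^\prime}(v_k)$; that vertex has index strictly smaller than $k$ but attains the maximum value $x_k$, contradicting the minimality in the choice of $k$. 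Hence $d_{\Gamma^\prime}(v_k)=n-1$, completing the claim.

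There is no genuine obstacle: the only point requiring care is the bookkeeping in the degree argument, specifically observing that $v_k$ having just one non-neighbor cannot avoid both of the candidates $v_1,v_2$, which is what produces the contradiction with the minimality of $k$ rather than with $X>0$ directly.
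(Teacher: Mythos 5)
Your proposal is correct and follows essentially the same route as the paper: ruling out $k\in\{1,2\}$ via the negative contribution $-x_{3-k}$ in the eigenvalue equation, then forcing $d_{\Gamma^{\prime}}(v_k)\geq n-2$ and eliminating $d_{\Gamma^{\prime}}(v_k)=n-2$ because equality would make one of $x_1,x_2$ attain the maximum, contradicting the minimality of $k$. The only difference is that you spell out more explicitly why $v_k$ cannot miss both $v_1$ and $v_2$, which the paper leaves implicit.
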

Claims \ref{cla2} and \ref{ola3} together imply $\Gamma^{\prime}$ contains exactly one negative edge, which lies in the unbalanced $C_3$. Assume that this edge is $v_1v_2$. Assume that $k$ is the smallest positive integer such that $x_{k}=\max_{1\leq i\leq n}x_{i}$, and $x_2\geq x_1$. According to Claim \ref{cccccla1}, we consider the following two cases.
\begin{cccase}\label{Case1}
	There exists an integer $r$ such that $x_r=0$ for $1\leq r \leq n$.
\end{cccase}
First, we consider the case where $3\leq t \leq n-2$. We assert that $d_{\Gamma^{\prime}}(v_r)\geq 1$. Otherwise, $d_{\Gamma^{\prime}}(v_r)=0$. Let $\Gamma^{\prime\prime}=\Gamma^{\prime}+v_1v_r$, then $\Gamma^{\prime\prime}$ is a $t\mathcal{C}_{3}^{-}$-free unbalanced signed graph, not switching isomorphic to $\Gamma_{n,t+1}$ and
\begin{align*}
	\lambda_1(A(\Gamma^{\prime\prime}))-\lambda_1(A(\Gamma^{\prime}))&\geq X^T(A(\Gamma^{\prime\prime})-A(\Gamma^{\prime}))X
	\\&=2x_1x_r\geq0.
\end{align*} 
If $\lambda_1(A(\Gamma^{\prime\prime}))=\lambda_1(A(\Gamma^{\prime}))$, then $X$ is also an eigenvector of $A(\Gamma^{\prime\prime})$ corresponding to $\lambda_1(A(\Gamma^{\prime\prime}))$. Based on the following equations,\\

$\lambda_1(A(\Gamma^{\prime}))x_1=\sum\limits_{v_s\in N_{\Gamma^\prime}(v_1)}\sigma^{\prime}(v_sv_1)x_s$,\\

$\lambda_1(A(\Gamma^{\prime}))x_r=\sum\limits_{v_s\in N_{\Gamma^\prime}(v_r)}\sigma^{\prime}(v_sv_r)x_s$,\\

$\lambda_1(A(\Gamma^{\prime\prime}))x_1=\sum\limits_{v_s\in N_{\Gamma^\prime}(v_1)}\sigma^{\prime}(v_sv_1)x_s+x_r$\\
and\\

$\lambda_1(A(\Gamma^{\prime\prime}))x_r=\sum\limits_{v_s\in N_{\Gamma^\prime}(v_r)}\sigma^{\prime}(v_sv_r)x_s+x_1$,\\
we obtain that $x_1=x_r=0$, which contradicts Claim \ref{c1}. Thus, $\lambda_1(A(\Gamma^{\prime\prime}))>\lambda_1(A(\Gamma^{\prime}))$, a contradiction. So, $d_{\Gamma^{\prime}}(v_r)\geq 1$. If $r\geq 3$, then $0=\lambda_1(A(\Gamma^{\prime}))x_r=\sum_{v_i\in N_{\Gamma^{\prime}}(v_r)}x_i>0$, a contradiction. Thus, $r=1$ or $2$. Without loss of generality, assume that $r=1$. Then  $k\geq 2$. Note that
\begin{center}
	$(n-2)x_k\leq\lambda_1(A(\Gamma^{\prime}))x_k=\sum\limits_{v_i\in N_{\Gamma^\prime}(v_k)}x_i\leq d_{\Gamma^{\prime}}(v_k)x_k$,
\end{center}
then  $d_{\Gamma^{\prime}}(v_k) \geq n-2$. If  $d_{\Gamma^{\prime}}(v_k)=n-2$, then each of the $n-2$ entries  of $X$ corresponding to the neighbors of $v_k$ is equal to $x_k$. It implies that $x_2=\cdots=x_n$. If $d_{\Gamma^{\prime}}(v_k)=n-1$, then
\begin{center}
	$(n-2)x_{{k}}\leq\lambda_{1}(A(\Gamma^{\prime}))x_{{k}}=x_{1}+\sum\limits_{v_{i}\in N_{\Gamma^{\prime}}(v_{k})\setminus\{v_{1}\}}x_{i}\leq(d_{\Gamma^{\prime}}(v_{{k}})-1)x_{{k}}=(n-2)x_k$.
\end{center}
Equality forces $x_2=\cdots=x_n$. This means that either $d_{\Gamma^{\prime}}(v_i)=n-2$ or $d_{\Gamma^{\prime}}(v_i)=n-1$ and $v_i$ is adjacent to all other vertices in $V(\Gamma^{\prime})\backslash\{v_1\}$ for any $i\in[2,n]$. As a result, $\Gamma^{\prime}[V(\Gamma^{\prime})\backslash\{v_1\}]\cong(K_{n-1},+)$. Since $\Gamma$ is a $t\mathcal{C}_{3}^{-}$-free unbalanced signed graph that is not switching isomorphic to $\Gamma_{n,t+1}$$(3\leq t\leq n-2)$, then we assert that there exist exactly $t-1$ vertices of degree $n-1$. Otherwise, let $r(r\neq t-1)$ denote the number of vertices of degree $n-1$. If $r>t$, then $\Gamma^{\prime}$ contains $t\mathcal{{C}}_3^-$, a contradiction. If $r=t$, then $\Gamma$ is switching isomorphic to $\Gamma_{n,t+1}$$(3\leq t\leq n-1)$, another contradiction. For the case $r\leq t-2$, assume that $d_{\Gamma^{\prime}}(v_i)=n-1$ for $i\in[2,r+1]$. Let $\Gamma^{\prime\prime}= \Gamma^{\prime}+v_1v_{r+2}$, then $\Gamma^{\prime\prime}$ is a $t\mathcal{C}_{3}^{-}$-free unbalanced signed graph, not switching isomorphic to $\Gamma_{n,t+1}$ and
\begin{align*}
	\lambda_1(A(\Gamma^{\prime\prime}))-\lambda_1(A(\Gamma^{\prime}))&\geq X^T(A(\Gamma^{\prime\prime})-A(\Gamma^{\prime}))X
	\\&=2x_1x_{r+2}\geq0.
\end{align*} 
If $\lambda_1(A(\Gamma^{\prime\prime}))=\lambda_1(A(\Gamma^{\prime}))$, then $X$ is also an eigenvector of $A(\Gamma^{\prime\prime})$ corresponding to $\lambda_1(A(\Gamma^{\prime\prime}))$. From the following equations,\\

$\lambda_1(A(\Gamma^{\prime}))x_1=\sum\limits_{v_s\in N_{\Gamma^\prime}(v_1)}\sigma^{\prime}(v_sv_1)x_s$,\\

$\lambda_1(A(\Gamma^{\prime}))x_{r+2}=\sum\limits_{v_s\in N_{\Gamma^\prime}(v_{r+2})}\sigma^{\prime}(v_sv_{r+2})x_s$,\\

$\lambda_1(A(\Gamma^{\prime\prime}))x_1=\sum\limits_{v_s\in N_{\Gamma^\prime}(v_1)}\sigma^{\prime}(v_sv_1)x_s+x_{r+2}$\\
and\\

$\lambda_1(A(\Gamma^{\prime}))x_{r+2}=\sum\limits_{v_s\in N_{\Gamma^\prime}(v_{r+2})}\sigma^{\prime}(v_sv_{r+2})x_s+x_1$,\\
we obtain that $x_1=x_{r+2}=0$, which contradicts Claim \ref{c1}. Thus,  $\lambda_1(A(\Gamma^{\prime\prime}))>\lambda_1(A(\Gamma^{\prime}))$, a contradiction.
Thus, there exist exactly $t-1$ vertices with a degree of $n-1$ and $\Gamma$ is switching isomorphic to $\Gamma_{n,t}$. For $3\leq t\leq n-3$, it is clear that  $3\leq t\leq\left\lfloor\frac{n}{2}\right\rfloor$, then $\lambda_1(A(\Gamma_{n,t}))>\lambda_1(A(\Sigma_{1,t-1,n-t-2}))$. If $\left\lfloor\frac{n}{2}\right\rfloor+1\leq t\leq n-3$, then $\lambda_1(A(\Sigma_{1,t-1,n-t-2}))>\lambda_1(A(\Gamma_{n,t}))$ by Lemma \ref{lq1}. For $t=n-2$, $\Gamma$ is switching isomorphic to $\Gamma_{n,n-2}$ since $\Gamma$ is not switching isomorphic to $\Gamma_{n,n-1}$. For $t\geq n-1$, a similar reasoning yields $x_2=\cdots=x_n$, additionally, either $d_{\Gamma^{\prime}}(v_i)=n-2$ or $d_{\Gamma^{\prime}}(v_i)=n-1$ and $v_i$ is adjacent to all other vertices $V(\Gamma^{\prime})\backslash\{v_1\}$ for any $i\in[2,n]$. It follows that $\Gamma^{\prime}[V(\Gamma^{\prime})\backslash\{v_1\}]\cong(K_{n-1},+)$. Next, we assert that there exist exactly $n-2$ vertices of degree $n-1$. Suppose for contradiction that there exist $r(r\neq t)$ such vertices.  If $r=n-1$, then $\Gamma$ is switching isomorphic to $\Gamma_{n,n}$$(t\geq n-1)$, a contradiction. If $r\leq n-3$, assume that $d_{\Gamma^{\prime}}(v_i)=n-1$ for $i\in[2,r+1]$. Let $\Gamma^{\prime\prime}= \Gamma^{\prime}+v_1v_{r+2}$, then $\Gamma^{\prime\prime}$ is a $t\mathcal{C}_{3}^{-}$-free unbalanced signed graph, not switching isomorphic to $\Gamma_{n,n}$ and
\begin{align*}
	\lambda_1(A(\Gamma^{\prime\prime}))-\lambda_1(A(\Gamma^{\prime}))&\geq X^T(A(\Gamma^{\prime\prime})-A(\Gamma^{\prime}))X
	\\&=2x_1x_{r+2}\geq0.
\end{align*} 
If $\lambda_1(A(\Gamma^{\prime\prime}))=\lambda_1(A(\Gamma^{\prime}))$, then $X$ is also an eigenvector of $A(\Gamma^{\prime\prime})$ corresponding to $\lambda_1(A(\Gamma^{\prime\prime}))$. Consider the following equations,\\

$\lambda_1(A(\Gamma^{\prime}))x_1=\sum\limits_{v_s\in N_{\Gamma^\prime}(v_1)}\sigma^{\prime}(v_sv_1)x_s$,\\

$\lambda_1(A(\Gamma^{\prime}))x_{r+2}=\sum\limits_{v_s\in N_{\Gamma^\prime}(v_{r+2})}\sigma^{\prime}(v_sv_{r+2})x_s$,\\

$\lambda_1(A(\Gamma^{\prime\prime}))x_1=\sum\limits_{v_s\in N_{\Gamma^\prime}(v_1)}\sigma^{\prime}(v_sv_1)x_s+x_{r+2}$\\
and\\

$\lambda_1(A(\Gamma^{\prime}))x_{r+2}=\sum\limits_{v_s\in N_{\Gamma^\prime}(v_{r+2})}\sigma^{\prime}(v_sv_{r+2})x_s+x_1$,\\
we obtain that $x_1=x_{r+2}=0$, which contradicts Claim \ref{c1}. Thus,  $\lambda_1(A(\Gamma^{\prime\prime}))>\lambda_1(A(\Gamma^{\prime}))$, a contradiction. Hence, $\Gamma^{\prime}$ is switching isomorphic to $\Gamma_{n,n-1}$ for $t\geq n-1$.
\begin{cccase}\label{Case2}
	$X>0$.
\end{cccase}
First, we analyze the case where $3\leq t \leq n-3$. We assert that $\Gamma^{\prime}[V(\Gamma^{\prime})\backslash\{v_1,v_2\}]\cong(K_{n-2},+)$. To prove this by contradiction, suppose there exists an edge $uv\notin \Gamma^{\prime}[V(\Gamma^{\prime})\backslash\{v_1,v_2\}]$, let $\Gamma^{\prime\prime}=\Gamma^{\prime}+uv$, then $\Gamma^{\prime\prime}$ is a $t\mathcal{C}_{3}^{-}$-free unbalanced signed graph, not switching isomorphic to $\Gamma_{n,t+1}$ and
\begin{align*}
	\lambda_1(A(\Gamma^{\prime\prime}))-\lambda_1(A(\Gamma^{\prime}))&\geq X^T(A(\Gamma^{\prime\prime})-A(\Gamma^{\prime}))X
	\\&=2x_ux_{v}>0,
\end{align*} 
a contradiction. Thus, $\Gamma^{\prime}[V(\Gamma^{\prime})\backslash\{v_1,v_2\}]\cong(K_{n-2},+)$. Next, we assert that $|N_{\Gamma^{\prime}}(v_1)\cap N_{\Gamma^{\prime}}(v_2)|=t-1$. Otherwise, $|N_{\Gamma^{\prime}}(v_1)\cap N_{\Gamma^{\prime}}(v_2)|\leq t-2$, which implies the existence of a vertex $v_s\notin {N_{\Gamma^{\prime}}(v_1)\cap N_{\Gamma^{\prime}}(v_2)}$. If $v_s\notin N_{\Gamma^{\prime}}(v_1)$ and $v_s\notin N_{\Gamma^{\prime}}(v_2)$ simultaneously, let $\Gamma^{\prime\prime}= \Gamma^{\prime}+v_1v_s+v_2v_s$, then $\Gamma^{\prime\prime}$ is a $t\mathcal{C}_{3}^{-}$-free unbalanced signed graph, not switching isomorphic to $\Gamma_{n,t+1}$ and
\begin{align*}
	\lambda_1(A(\Gamma^{\prime\prime}))-\lambda_1(A(\Gamma^{\prime}))&\geq X^T(A(\Gamma^{\prime\prime})-A(\Gamma^{\prime}))X
	\\&=2(x_1+x_2)x_s>0,
\end{align*} 
a contradiction. For the cases where $v_s\in {N_{\Gamma^{\prime}}(v_1)\setminus N_{\Gamma^{\prime}}(v_2)}$ or $v_s\in {N_{\Gamma^{\prime}}(v_2)\setminus N_{\Gamma^{\prime}}(v_1)}$, analogous contradictions are uniformly derived in both scenarios. 
 Thus, $|N_{\Gamma^{\prime}}(v_1)\cap N_{\Gamma^{\prime}}(v_2)|=t-1$. Now, we claim that either $v_iv_1\in E(\Gamma^{\prime})$ or $v_iv_2\in E(\Gamma^{\prime})$ for any $v_i\in V(\Gamma^{\prime})\backslash(N_{\Gamma^{\prime}}[v_1]\cap N_{\Gamma^{\prime}}[v_2])$. Assume for contradiction that there exists $v_s$ 
such that $v_sv_1\notin E(\Gamma^{\prime})$ and  $v_sv_2\notin E(\Gamma^{\prime})$,  let  $\Gamma^{\prime\prime}= \Gamma^{\prime}+v_2v_s$, then $\Gamma^{\prime\prime}$ is a $t\mathcal{C}_{3}^{-}$-free unbalanced signed graph, not switching isomorphic to $\Gamma_{n,t+1}$ and
\begin{align*}
	\lambda_1(A(\Gamma^{\prime\prime}))-\lambda_1(A(\Gamma^{\prime}))&\geq X^T(A(\Gamma^{\prime\prime})-A(\Gamma^{\prime}))X
	\\&=2x_2x_{s}>0,
\end{align*} 
a contradiction. To conclude this line of reasoning, we assert that $d_{\Gamma^{\prime}}(v_1)=t+1$. If $d_{\Gamma^{\prime}}(v_1)=t$, then $\Gamma$ is switching isomorphic to $\Gamma_{n,t+1}$$(3\leq t\leq n-3)$.  If $d_{\Gamma^{\prime}}(v_1)\geq t+2$, then $\Gamma^\prime$ is switching isomorphic to $\Sigma_{d_{\Gamma^{\prime}}(v_1)-t,t-1,n-d_{\Gamma^{\prime}}(v_1)-1}$. Let $\Gamma^{\prime\prime}= \Gamma^{\prime}-v_1v_r+v_2v_r$ for  $v_r\in  N_{\Gamma^{\prime}}[v_1]\backslash(N_{\Gamma^{\prime}}[v_1]\cap N_{\Gamma^{\prime}}[v_2])$, then  $\Gamma^{\prime\prime}$ is a $t\mathcal{C}_{3}^{-}$-free unbalanced signed graph, not switching isomorphic to $\Gamma_{n,t+1}$ and
\begin{align*}
	\lambda_1(A(\Gamma^{\prime\prime}))-\lambda_1(A(\Gamma^{\prime}))&\geq X^T(A(\Gamma^{\prime\prime})-A(\Gamma^{\prime}))X
	\\&=2x_r(x_2-x_1)\geq0.
\end{align*} 
If $\lambda_1(A(\Gamma^{\prime\prime}))=\lambda_1(A(\Gamma^{\prime}))$, then $X$ is also an eigenvector of $A(\Gamma^{\prime\prime})$ corresponding to $\lambda_1(A(\Gamma^{\prime\prime}))$. Based on the following equations,\\

$\lambda_1(A(\Gamma^{\prime}))x_1=\sum\limits_{v_s\in N_{\Gamma^{\prime}}(v_1)}x_s$\\
and\\

$\lambda_1(A(\Gamma^{\prime\prime}))x_1=\sum\limits_{v_s\in N_{\Gamma^{\prime}}(v_1)}x_s-x_r$,\\
we obtain that  $x_r=0$, which contradicts Case \ref{Case2}. Thus, $\lambda_1(A(\Gamma^{\prime\prime}))>\lambda_1(A(\Gamma^{\prime}))$, a contradiction. We therefore conclude $d_{\Gamma^{\prime}}(v_1)=t+1$, so $\Gamma^{\prime}$ is switching isomorphic to $\Sigma_{1,t-1,n-t-2}$. Note that for $3\leq t\leq\left\lfloor\frac{n}{2}\right\rfloor$, $\lambda_1(A(\Gamma_{n,t}))>\lambda_1(A(\Sigma_{1,t-1,n-t-2}))$, and for  $\left\lfloor\frac{n}{2}\right\rfloor+1\leq t\leq n-3$, $\lambda_1(A(\Sigma_{1,t-1,n-t-2}))>\lambda_1(A(\Gamma_{n,t}))$ by Lemma \ref{lq1}. For $t=n-2$, we similarly have  $|N_{\Gamma^{\prime}}(v_1)\cap N_{\Gamma^{\prime}}(v_2)|=n-3$. Without loss of generality, assume that $|N_{\Gamma^{\prime}}(v_1)\cap N_{\Gamma^{\prime}}(v_2)|=\{v_3,...,v_{n-1}\}$. Next, we assert that $v_n$ is not adjacent to both $v_1$ and $v_2$. Otherwise, if $v_n$ is adjacent to both $v_1$ and $v_2$, then $\Gamma^{\prime}\cong \Gamma_{n,n}$, and  $\Gamma^{\prime}$ contains $(n-2)\mathcal{C}_{3}^{-}$, a contradiction. If $v_n$ is adjacent to either $v_1$ or $v_2$, then $\Gamma^{\prime}\cong \Gamma_{n,n-1}$, another contradiction. Thus, $v_n$ is not adjacent to both $v_1$ and $v_2$.  Hence, $\Gamma$ is switching isomorphic to $U_1$. However, $\lambda_1(A(\Gamma_{n,n-2}))>\lambda_1(A(U_1)$ by Lemma \ref{lqq1}. Thus,  $\Gamma$ is switching isomorphic to $\Gamma_{n,n-2}$ for $t=n-2$. For $n-1 \leq t$, a parallel argument gives $\Gamma^{\prime}[V(\Gamma^{\prime})\backslash\{v_1,v_2\}]\cong(K_{n-2},+)$. Next, we assert that $d_{\Gamma^{\prime}}(v_2)=n-1$. Suppose for contradiction $d_{\Gamma^{\prime}}(v_2)\leq n-2$. Without loss of generality, assume that $v_2v_r\notin E(\Gamma^{\prime})$, let $\Gamma^{\prime\prime}= \Gamma^{\prime}+v_2v_r$, then $\Gamma^{\prime\prime}$ is a $t\mathcal{C}_{3}^{-}$-free unbalanced signed graph, not switching isomorphic to $\Gamma_{n,n}$ and
\begin{align*}
	\lambda_1(A(\Gamma^{\prime\prime}))-\lambda_1(A(\Gamma^{\prime}))&\geq X^T(A(\Gamma^{\prime\prime})-A(\Gamma^{\prime}))X
	\\&=2x_2x_r>0,
\end{align*} 
a contradiction. Thus, $d_{\Gamma^{\prime}}(v_2)=n-1$. Finally, we claim that $d_{\Gamma^{\prime}}(v_1)=n-2$. If $d_{\Gamma^{\prime}}(v_1)=n-1$, then $\Gamma^{\prime}$ is switching isomorphic to $\Gamma_{n,n}$, a contradiction. If $d_{\Gamma^{\prime}}(v_1)\leq n-3$, without loss of generality, assume that $v_1v_s\notin E(\Gamma^{\prime})$, let $\Gamma^{\prime\prime}= \Gamma^{\prime}+v_1v_s$, then $\Gamma^{\prime\prime}$ is a $t\mathcal{C}_{3}^{-}$-free unbalanced signed graph, not switching isomorphic to $\Gamma_{n,n}$ and
\begin{align*}
	\lambda_1(A(\Gamma^{\prime\prime}))-\lambda_1(A(\Gamma^{\prime}))&\geq X^T(A(\Gamma^{\prime\prime})-A(\Gamma^{\prime}))X
	\\&=2x_1x_s>0,
\end{align*} 
a contradiction. Thus, $d_{\Gamma^{\prime}}(v_1)=n-2$ and $\Gamma^{\prime}$ is switching isomorphic to $\Gamma_{n,n-1}$. The proof is completed.

\end{document}